\setlist[enumerate]{itemsep=.2em,topsep=.2em,leftmargin=1.25em,itemindent=2.0em}
\newtheorem{thm}{Theorem}
\newtheorem{lem}[thm]{Lemma}
\newtheorem{cor}[thm]{Corollary}
\newtheorem{prop}[thm]{Proposition}
\newtheorem{conj}[thm]{Conjecture}
\theoremstyle{definition}
\newtheorem{defn}[thm]{Definition}
\newtheorem{say}[thm]{}
\newtheorem{ques}[thm]{Question}    
\newtheorem*{ack}{Acknowledgments}      
\newtheorem{notation}[thm]{Notation}   
\newtheorem{defn-thm}[thm]{Definition--Theorem}  
\newtheorem{defn-lem}[thm]{Definition--Lemma}  
\theoremstyle{remark}
\newtheorem{subrem}[equation]{Remark}
\newtheorem{subex}[equation]{Example}
\numberwithin{equation}{thm}
\renewcommand{\c}[0]{{\mathbb C}}  
\renewcommand{\o}[0]{{\mathcal O}} 
\newcommand{\z}[0]{{\mathbb Z}}
\renewcommand{\r}[0]{{\mathbb R}} 
\renewcommand{\a}[0]{{\mathbb A}}
\newcommand{\q}[0]{{\mathbb Q}}
\newcommand{\supp}[0]{\operatorname{Supp}}    
\newcommand{\red}[0]{\operatorname{red}}
\newcommand{\diff}[0]{\operatorname{Diff}}
\newcommand{\rdown}[1]{\lfloor{#1}\rfloor}
\newcommand{\onto}[0]{\twoheadrightarrow}
\newcommand{\coeff}[0]{\operatorname{coeff}} 
\newcommand{\mld}[0]{\operatorname{mld}}
\def\into{\DOTSB\lhook\joinrel\to}
\def\qis{\,{\simeq}_{qis}\,}
\def\loccoh#1.#2.#3.#4.{H^{#1}_{#2}(#3,#4)}
\DeclareMathAlphabet{\mathchanc}{OT1}{pzc}%
                                {m}{it}
\newcommand{\lcg}[0]{\operatorname{lcg}}
\begin{document}
\bibliographystyle{amsalpha}

 \hfill\today

\title{Du~Bois property of log centers}
\author{J\'anos Koll\'ar and S\'andor J Kov\'acs}
\address{JK: Department of Mathematics, Princeton University, Fine
  Hall, Washington Road, Princeton, NJ 08544-1000, USA} 
\email{kollar@math.princeton.edu}
\urladdr{http://www.math.princeton.edu/$\sim$kollar}
\address{SK: University of Washington, Department of Mathematics, Box 354350,
Seattle, WA 98195-4350, USA}
\email{skovacs@uw.edu}
\urladdr{http://www.math.washington.edu/$\sim$kovacs}

\maketitle

The analytic aspects of multiplier ideals, log canonical thresholds and log canonical
centers played an important role in several papers of Demailly, including
\cite{MR1786484, MR1919457, dk, MR2978333, MR3179606,MR3525916, MR3647124,
  MR3923220}.

Log canonical centers are seminormal by \cite{ambro, fuj-book}, even Du~Bois by
\cite{k-db, k-db2}.  This has important applications to birational geometry and
moduli theory; see \cite{k-db, k-db2} or \cite[Sec.2.5]{k-modbook}.

We recall the concept of Du~Bois singularities in
Definition--Theorem~\ref{schwede.thm}.  An unusual aspect is that this notion makes
sense for complex spaces that have irreducible components of different
dimension. This is crucial even for the statement of our theorem.

In this note we generalize the results of \cite{k-db, k-db2} by showing that if a
closed subset $V\subset X$ is `close enough' to being a union of log canonical
centers, then it is Du~Bois.

The minimal log discrepancy---denoted by $\mld(V, X, \Delta)$ as in
Definition~\ref{crep.log.str.mld.defn}---is a nonnegative rational number, that
measures the deviation from being a union of log canonical centers. The log canonical
gap in dimension $n$---denoted by $\lcg(n)$ as in
Definition~\ref{epsilon.defn}---gives the precise notion of `closeness.'

\begin{thm}\label{DB.epsilon.thm}
  Let $(X, \Delta)$ be a log canonical pair of dimension $n$, and $V\subset X$ a
  closed subset such that $\mld(V, X,\Delta)<\lcg(n)$.  Then $V$ has Du~Bois
  singularities.
\end{thm}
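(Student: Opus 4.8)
The plan is to reduce to the case, established in \cite{k-db, k-db2}, that a union of log canonical centers of a log canonical pair is Du~Bois; the role of the hypothesis $\mld(V,X,\Delta)<\lcg(n)$ is to supply exactly the room needed to realize $V$ inside that framework.

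First I would fix a log resolution $f\colon Y\to X$ of $(X,\Delta)$ such that $f^{-1}(V)$, $\ex(f)$ and the birational transform of $\supp\Delta$ together form a simple normal crossing divisor, and write $K_Y\simq f^*(K_X+\Delta)+\sum_i a_iE_i$ with all $a_i\ge -1$. Set $E_V:=f^{-1}(V)_{\mathrm{red}}=\sum_{f(E_i)\subseteq V}E_i$. By Definition~\ref{crep.log.str.mld.defn}, $\mld(V,X,\Delta)$ is computed from the log discrepancies $a_i+1$ of the divisors that genuinely ``see'' $V$ (those dominating a component of $V$, resp.\ meeting $V$ with the appropriate transversality), and the hypothesis says all of these lie in $[0,\lcg(n))$. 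The key consequence is that raising the coefficients of these $E_i$, and of the birational transforms of the divisorial components of $V$, up to $1$ costs, on each such divisor, strictly less than $\lcg(n)$.

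Second --- and this is the step I expect to be the main obstacle --- I would invoke Definition~\ref{epsilon.defn} of the log canonical gap to see that this coefficient-raising does not leave the log canonical world: after performing it on $Y$ and pushing forward, one obtains an $\r$-divisor $\Delta'\ge\Delta$ with $(X,\Delta')$ log canonical and $V$ equal to a union of log canonical centers of $(X,\Delta')$ --- every component of $V$ is dominated by a stratum, and every newly created log canonical center maps into $V$. The content of ``$<\lcg(n)$'' is precisely that the total deficit one must absorb is smaller than any value that a lower--dimensional log canonical pair could force; making this rigorous should rest on the boundedness and ACC results for log discrepancies in dimension $<n$ together with an induction on $\dim V$, and the delicate part is to guarantee that $(X,\Delta')$ is log canonical \emph{everywhere}, not merely at the generic points of the components of $V$, and that no extra log canonical center escapes $V$.

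Finally, once $V$ is a union of log canonical centers of the log canonical pair $(X,\Delta')$, it is Du~Bois by \cite{k-db, k-db2}. Here one genuinely uses the form of the Du~Bois property valid for spaces whose irreducible components have different dimension (Definition--Theorem~\ref{schwede.thm}), since the strata of $V$ produced above need not be equidimensional; this is also why no reduction to the irreducible or pure--dimensional case is needed, as the results of \cite{k-db, k-db2} already handle arbitrary unions of log canonical centers directly.
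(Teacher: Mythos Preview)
Your proposed route---manufacture a new boundary $\Delta'\ge\Delta$ so that $(X,\Delta')$ is log canonical and $V$ is a union of its log canonical centers, then cite \cite{k-db,k-db2}---is precisely what the paper formulates as Conjecture~\ref{near.lc.=.lc.q} and Question~\ref{near.lc.=.lc.q.2}, and treats as open. Two concrete obstructions: first, one cannot in general keep $\Delta'\ge\Delta$; the paper's example $\bigl(\a^2,(1-\eta)(x=0)+(1-\eta)(y=0)+\eta(x=y)\bigr)$ already shows that pushing the coefficients of $(x=0)$ and $(y=0)$ up to $1$ while retaining the $\eta(x=y)$ term destroys log canonicity at the origin. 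Second, and more seriously, even the weaker form (drop $\Delta'\ge\Delta$) gives no handle on components $V_i$ that sit inside a pre-existing log canonical center of $(X,\Delta)$: you cannot ``turn off'' that center by changing the boundary, and inside it the gap argument you sketch has nothing to bite on. Your invocation of $\lcg(n)$ is also off---Definition~\ref{epsilon.defn} is a statement about $n$-dimensional pairs with a $\z$-divisor, not about ``what a lower-dimensional pair could force,'' and it does not by itself control what happens when you simultaneously raise several coefficients in the presence of other boundary components.

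The paper avoids all of this. Rather than modifying $\Delta$, it passes to a $\q$-factorial dlt modification $g:(Y,\Delta_Y)\to X$ on which $g^{-1}(V)$ is a divisor $D$ with $\rdown{\Delta_Y}\subset D$ (Proposition~\ref{thm:one-div-models}), runs an MMP to make $-D$ relatively semiample (Lemma~\ref{mmp.on.crep.D.lem}), and then uses a Grauert--Riemenschneider/splitting argument (Theorem~\ref{1/6.sn.lem}) together with Property~\ref{db.sings.props.say}.3 to conclude that $V\cup Z^{\diamond}$ is Du~Bois. The role of $\lcg(n)$ is confined to the purely divisorial Lemma~\ref{DB.epsilon.div.lem}, where it is applied exactly as in Definition~\ref{epsilon.defn}. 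Stripping off the extra log canonical centers $Z^{\diamond}$ to get $V$ itself is then done by induction on $\dim Z$ using the Mayer--Vietoris Property~\ref{db.sings.props.say}.2 and the subadditivity (\ref{eq:2}); this inductive peeling, carried out via crepant log structures in Corollary~\ref{W+lc.db.lem} and Proposition~\ref{W.db.ind.lem}, is the substitute for the global boundary-modification you were hoping for.
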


The theorem applies to algebraic varieties and algebraic spaces of finite type over a
field of characteristic 0.

By \cite{k-1/6}, if $\mld(V, X,\Delta)<\tfrac16$, then $V$ is seminormal, and
$\tfrac16$ is optimal in every dimension $\geq 2$.  The bound $\lcg(n)$ is also
optimal for every $n$, but its value is not known for $n\geq 4$, and $\lcg(n)$
converges to 0 very rapidly, see (\ref{epsilon.defn.3}) and (\ref{epsilon.defn.4}).

\medskip

We follow the terminology of \cite{km-book} and of \cite{kk-singbook}.

\begin{defn}[Minimal log discrepancy]\label{crep.log.str.mld.defn}
  Let $(X,\Delta)$ be a log canonical pair and $V\subset X$ an irreducible
  subvariety. The {\it minimal log discrepancy} of $V$ is the infimum of the numbers
  $1+a(E,X,\Delta)$, where $E$ runs through all divisors over $X$ that dominate $V$,
  where $a(E,X,\Delta)$ denotes the discrepancy as in \cite[2.25]{km-book}.  It is
  denoted by $\mld(V,X,\Delta)$.  Thus if $V=D$ is a divisor on $X$, then
  $\mld(D,X,\Delta)=1-\coeff_D\Delta$.

  $V$ is a {\it log canonical center} or {\it lc center} of $(X,\Delta)$ iff
  $\mld(V,X,\Delta)=0$. It is sometimes convenient to view $X$ itself as a log
  canonical center.

  Let $V\subset Z$ be a closed subset with irreducible components $V_i$. We define
  its minimal log discrepancy as
  $\mld(V,X,\Delta):=\max_i \bigl\{\mld(V_i,X,\Delta)\bigr\}$.
\end{defn}

\begin{defn}[Log canonical gap]\label{epsilon.defn}
  The log canonical gap in dimension $n$, denoted by $\lcg(n)$, is the largest (real)
  number $\epsilon$ with the following property.
  \begin{enumerate}
  \item Let $(X, dD)$ be a $\q$-factorial, log canonical pair with $\dim X=n$ and $D$
    a $\z$-divisor. Assume that $d>1-\epsilon$. Then $(X, D)$ is also log canonical.
  \end{enumerate}
  Note that $1-\lcg(n)$ is the largest log canonical threshold in dimension $n$ (that
  is $<1$).  
  By replacing all $d_i$ by the smallest one we see that it is also the largest
  $\epsilon$ with the following 
  property.
  \begin{enumerate}[resume]
  \item Let $(X, \sum d_iD_i)$ be a $\q$-factorial, log canonical pair with
    $\dim X=n$, where $D_i$ are $\z$-divisors and $d_i\in\q$. Assume that
    $d_i>1-\epsilon$ for every $i$. Then $(X, \sum D_i)$ is also log canonical.
  \end{enumerate}
  
  It is easy to see that $\lcg(2)=\frac16$, and $\lcg(3)=\frac1{42}$ by
  \cite[5.5]{k-logsurf}.  A difficult theorem \cite[Thm.1.1]{hmx-acc} says that
  $\lcg(n)$ is positive for every $n$, but no explicit lower bound is known for
  $n\geq 4$.
  
  \setcounter{equation}{2}
  \begin{subrem}\label{epsilon.defn.3}
    Let $(X, dD)$ be a $\q$-factorial, log canonical pair such that $(X, D)$ is not
    log canonical.  As in \cite[2.49--53]{km-book}, it has a quasi-\'etale cover
    $\pi:(X', D')\to (X, D)$ such that $K_{X'}$ and $D'$ are both Cartier.  By Reid's
    lemma, $(X', dD')$ is log canonical and $(X', D')$ is not log canonical; see
    \cite[5.20]{km-book}.  Since $K_{D'}$ is Cartier, log canonical coincides with
    Du~Bois by \cite{Kovacs99}. This shows that the bound $\mld(V, X,\Delta)<\lcg(n)$
    is optimal in Theorem~\ref{DB.epsilon.thm}.
  \end{subrem}
  

\begin{subex}\label{epsilon.defn.4}
  Set $c_1=2$ and let $c_{k+1}:=c_1\cdots c_k+1$; it is called Euclid's or
  Sylvester's sequence, see \cite[A00058]{sequences}.  It starts as
  $2,3,7,43,1807, 3263443, ...$ 
  Then $D_n:=\bigl(z_1^{c_1}+\cdots+z_n^{c_n}=0\bigr)\subset
  \c^n$ is not log canonical, but $\bigl(\a^n, \bigl(1-\tfrac{1}{c_1\cdots
    c_{n}}\bigr)D\bigr)$ is log canonical; see \cite[8.6]{kk-singbook} for details.

  Thus $\lcg(n)\leq \tfrac{1}{c_1\cdots c_{n}}$, and the latter goes to 0 doubly
  exponentially.
\end{subex}
%
\end{defn}

\subsection*{Du~Bois property}{\ }

\noindent
Let $M$ be a compact K\"ahler manifold. One of the useful consequences of the Hodge
decomposition is the surjectivity of the natural map
\[
  H^i(M, \c)\onto H^i(M, \o_M).
\]
Roughly speaking, projective varieties with Du~Bois singularities form the largest
class that is stable under natural operations (small deformations, products, general
hyperplane sections) where the above surjectivity still holds.
For our curent purposes the Du~Bois property can be handled as a black box. We list
in Paragraph~\ref{db.sings.props.say} all the properties that we use.  We give
references to the original papers; \cite[Chap.6]{kk-singbook} is a suitable general
introduction.

The original and most useful definition is rather complicated; see the papers
\cite{DuBois81, MR2339829, Kovacs11d} or \cite[Sec.6.1]{kk-singbook}.  The following
characterization emphasizes that Du~Bois is a generalization of seminormality. We can
take (\ref{schwede.thm}.\ref{item:2}) as our definition. (We use `$X$ is Du~Bois' and
`$X$ has Du~Bois singularities' as synonyms.)  We state the version given in
\cite[6.4]{Kovacs-Schwede11}.

\begin{defn-thm}\cite{MR2339829}\label{schwede.thm}
  Let $X$ be reduced and $Y\supset X$ a smooth space containing it.  Let
  $\pi:Y'\to Y$ be an embedded log resolution of $X$, that is, $Y'$ is smooth and
  $E:=\red \pi^{-1}(X)$ is a simple normal crossing divisor.  Then $X$ is
  \begin{enumerate}
  \item\label{item:1} seminormal iff $\pi_*\o_E=\o_X$, and
  \item\label{item:2} Du~Bois iff $\pi_*\o_E=\o_X$ and $R^i\pi_*\o_E=0$ for
    $i>0$.\qed
  \end{enumerate}
\end{defn-thm}

In particular, if $X$ is Du~Bois, then it is reduced and seminormal.

If $X$ is smooth then the blow-up $Y':=B_XY\to Y$ shows that $X$ is Du~Bois.

\begin{say}[Properties of Du~Bois singularities that we
  use]\label{db.sings.props.say}
 
  We work either with algebraic spaces of finite type over a field of characteristic
  0. 
  Note that we allow them to have irreducible components of different dimensions.
  
  \medskip\noindent {\it Property~\thethm.0.}  Smooth implies
  Du~Bois.  Du~Bois implies reduced and seminormal.
  
  \medskip\noindent {\it Property~\thethm.1.} \cite{k-db, k-db2} Let $(X, \Delta)$ be
  an log canonical pair and $V\subset X$ a union of some of its log canonical
  centers. Then $V$ is Du~Bois. More generally, this holds for log canonical centers
  of crepant log structures, as in Definition~\ref{crep.l.s.def}.

  \medskip
  \noindent {\it Property~\thethm.2.} \cite[2.11-12]{MR2784747} Let
  $X_1, X_2\subset X$ be closed subspaces. If 3 of
  $\{X_1\cap X_2, X_1, X_2, X_1\cup X_2\}$ are Du~Bois, then so is the 4th.

  \medskip
  \noindent {\it Property~\thethm.3.} \cite[1.6]{k-db}, \cite[3.3]{Kovacs11c} and
  \cite[6.27]{kk-singbook}.  Let $p: Y\to X$ be a proper surjective morphism,
  $V\subset X$ a closed, reduced subscheme, and $D:= \supp p^{-1}(V)$. Assume that
  $\o_X(-V)\to R p_*\o_Y(-D)$ has a left inverse and $Y, D$ are Du~Bois.  Then $X$ is
  Du~Bois $\Leftrightarrow$ $V$ is Du~Bois.
\end{say}

In applications they key is to find examples where
Property~\ref{db.sings.props.say}.3 applies. The following gives most known cases.

\begin{thm}\label{1/6.sn.lem}
  Let $f:Y\to Z$ be a projective morphism with connected fibers between normal
  spaces. Assume that $(Y, \Delta)$ is $\q$-factorial, dlt and
  $K_Y+\Delta\sim_{f,\r}0$.  Let $D$ be an effective $\z$-divisor such that
  $\rdown{\Delta}\subset \supp D\subset \supp \Delta$ and $-D$ is $f$-semiample. Set
  $V=f(\supp D)$.  Then $\o_Z(-V)\to R f_*\o_Y(-D)$ has a left inverse
\end{thm}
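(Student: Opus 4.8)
### Proof proposal for Theorem~\ref{1/6.sn.thm}

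The plan is to reduce to a statement about pushforwards of line bundles, and then to produce the left inverse by exploiting the Kawamata--Viehweg-type vanishing and injectivity theorems available for dlt pairs with $K_Y+\Delta\sim_{f,\r}0$. The key observation is that, since $-D$ is $f$-semiample and $\rdown{\Delta}\subset\supp D\subset\supp\Delta$, there is an effective $\r$-divisor $D'$ supported on $\supp\Delta$ with $D'\simrl -D$ over $Z$ and with small coefficients, so that $(Y,\Delta - \epsilon D + \text{(something)})$ remains a good pair; alternatively, writing $\Delta = \rdown\Delta + \{\Delta\}$, one arranges $D = \rdown\Delta + D''$ with $D''$ supported on the fractional part. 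The point is that $D$ is close enough to a reduced divisor in the boundary that $\o_Y(-D)$ behaves like the ideal sheaf of a union of lc centers relative to $f$.

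First I would set up the standard pushforward argument: let $p\colon Y'\to Y$ be a log resolution making $\supp\Delta\cup\supp D$ snc, let $E'$ be the appropriate divisor upstairs, and reduce the splitting of $\o_Z(-V)\to Rf_*\o_Y(-D)$ to a splitting of $\o_Z(-V)\to R(f\circ p)_*\o_{Y'}(-E')$ together with the (known) splitting coming from the dlt/rational-singularities comparison $\o_Y(-D)\to Rp_*\o_{Y'}(-E')$. The heart of the matter is then the degeneration at $E_1$ of the relevant spectral sequence and the injectivity on $H^0$, i.e.\ that the trace map $Rf_*\o_Y(-D)\to \o_Z(-V)$ composes with $\o_Z(-V)\to Rf_*\o_Y(-D)$ to the identity. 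For this I would invoke Kollár's injectivity/torsion-freeness package (as in \cite[Chap.10]{kk-singbook}): since $K_Y+\Delta\sim_{f,\r}0$ and $D$ lies in $\supp\Delta$, the sheaf $R^if_*\o_Y(-D)$ is torsion-free for all $i$, and the natural map $\o_Z(-V)\to f_*\o_Y(-D)$ is an isomorphism away from a subset of codimension $\geq 1$; torsion-freeness upgrades this to the splitting on the derived level.

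Concretely, I would argue that $-D\simrl K_Y+(\Delta - \rdown\Delta)+(\rdown\Delta - D) + \text{(nef/semiample correction)}$ over $Z$; since $-D$ is $f$-semiample we may perturb $\Delta$ slightly to a klt boundary $\Delta^\flat$ with $\rdown{\Delta^\flat}=0$ and $-D\simql K_Y+\Delta^\flat + (\text{$f$-semiample})$, so that $\o_Y(-D)\cong \o_Y(K_Y+\Delta^\flat + (\text{$f$-semiample}) + \Gamma)$ for a reduced divisor $\Gamma$ inside the old $\rdown\Delta$ with $f(\Gamma)=V$. Then the pair $(Y,\Delta^\flat+\Gamma)$ is log canonical with $\Gamma$ a union of lc centers dominating $V$, and the desired splitting of $\o_Z(-V)\to Rf_*\o_Y(-D)$ is exactly the relative version of the splitting underlying Property~\ref{db.sings.props.say}.1 (the $\o_X(-V)\to Rp_*\o_Y(-D)$ left inverse from \cite[1.6]{k-db}); one transfers that statement from the absolute to the $f$-relative setting using that $f$ is projective with connected fibers and the same vanishing theorems apply over $Z$.

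The main obstacle I anticipate is bookkeeping the $\r$-linear equivalences and making the perturbation of $\Delta$ into a klt boundary compatible with \emph{keeping the whole of $\supp D$ in the boundary and $f(\supp D)=V$ exactly}: one must not lose any component of $V$ when passing from $D$ to the reduced divisor $\Gamma$, nor accidentally make the pair non-lc. This is where the precise hypotheses $\rdown\Delta\subset\supp D\subset\supp\Delta$ and $-D$ being $f$-semiample (rather than merely $f$-nef) do the work: semiampleness lets one choose a general effective member $0\le H\simql -D$ supported on fibers' complements so that $(Y,\Delta-D+H)$ or a small multiple has the desired form, and the containment of supports guarantees $\supp(\Delta-D+H)$ still surjects onto $V$. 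After that, the vanishing and torsion-freeness statements are the standard ones for lc pairs over a base, and the splitting follows formally. I would expect the write-up to spend most of its length on this reduction and only a few lines on the homological-algebra extraction of the left inverse.
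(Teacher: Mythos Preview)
Your overall strategy---write $-D$ as $K_Y$ plus a klt boundary plus an $f$-semiample class, then invoke Koll\'ar's torsion-freeness/splitting package---is exactly what the paper does, but you have buried the key identity under unnecessary scaffolding and introduced an internally inconsistent detour.

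The paper's argument is one line of algebra. Since $K_Y+\Delta\sim_{f,\r}0$,
\[
-D\;\sim_{f,\r}\;K_Y+\Delta-D\;=\;K_Y+(\Delta-\epsilon D)+(1-\epsilon)(-D).
\]
The hypothesis $\supp D\subset\supp\Delta$ makes $\Theta:=\Delta-\epsilon D$ effective for small $\epsilon>0$; the hypothesis $\rdown{\Delta}\subset\supp D$ makes $(Y,\Theta)$ \emph{klt}, not merely lc. Thus $-D\sim_{f,\r}K_Y+\Theta+(\text{$f$-semiample})$ with $(Y,\Theta)$ klt, and the left inverse for $\o_Z(-V)\to Rf_*\o_Y(-D)$ is a direct citation of \cite[10.41]{kk-singbook} (equivalently \cite[3.1]{k-hdi2}). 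In the birational case Grauert--Riemenschneider even gives $R^if_*\o_Y(-D)=0$ for $i>0$, so the map is a quasi-isomorphism. No log resolution, no auxiliary reduced divisor $\Gamma$, no general member $H\simql -D$ is needed; the two containment hypotheses are each used exactly once, to verify effectivity and klt-ness of $\Theta$.

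Your third paragraph nearly reaches this formula and then contradicts itself: you write $-D\simql K_Y+\Delta^\flat+(\text{semiample})$ and immediately afterwards $\o_Y(-D)\cong\o_Y(K_Y+\Delta^\flat+(\text{semiample})+\Gamma)$ with $\Gamma$ a nonzero reduced divisor. Both cannot hold. The attempt to reinstate $\Gamma$ in order to route through the lc-center splitting of Property~\ref{db.sings.props.say}.1 is the wrong turn: once the boundary is klt, the result you want is the \emph{simpler} klt splitting theorem, not the lc version, and there is no need to realize $V$ as the image of lc centers of an auxiliary pair. Likewise the log-resolution reduction in your second paragraph and the anticipated ``main obstacle'' in your last paragraph evaporate once you use the identity above: $\supp D$ never has to be tracked through a modification, and there is nothing to perturb beyond replacing $\Delta$ by $\Delta-\epsilon D$.
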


Note that, since $-D$ is $f$-semiample, $D$ does not dominate $Z$. Thus
$V\subsetneq Z$ and $\o_Z(-V)$ makes sense.

\begin{proof} If  $f$ is birational then 
  the proof  is much simpler, and  worth doing separately.

  Choose $\epsilon>0$ such that $\Theta:=\Delta-\epsilon D$ is effective.  Note that
  \[
  -D\sim_{f,\r} K_Y+\Theta+(1-\epsilon)(-D),
  \eqno{(\ref{1/6.sn.lem}.1)}
  \]

  $(Y, \Theta)$ is klt and $(1-\epsilon)(-D)$ is $f$-semiample.

  In the birational case, the general form of Grauert-Riemenschneider vanishing gives
  that $R^if_*\o_Y(-D)=0$ for $i>0$; see \cite[2.68]{km-book}.
  Thus $R f_*\o_Y(-D)\qis f_*\o_Y(-D)=\o_Z(-V)$.

  If $D$ does not dominate $Z$, then the assumption $\rdown{\Delta}\subset \supp D$
  implies that the generic fiber is klt.  Also, $D=\supp f^{-1}(V)$, since $-D$ is
  $f$-nef and the fibers are connected. We can now use \cite[3.1]{k-hdi2}, more
  precisely the form given in \cite[10.41]{kk-singbook}, to get the required left
  inverse.
\end{proof}

\subsection*{The klt case   of Theorem~\ref{DB.epsilon.thm}}{\ }

\begin{say}
  \label{DB.epsilon.thm.klt.pf}
  The proof is short and follows \cite{k-1/6}.

  First we show the special case when $\supp V$ is a divisor; see
  Lemma~\ref{DB.epsilon.div.lem}.

  In general, we find a dlt modification $g:(Y, \Delta_Y)\to (X, \Delta)$ such that
  $D:=g^{-1}(V)$ is a divisor and $\mld(D, Y, \Delta_Y)=\mld(V, X, \Delta)$; see
  Proposition~\ref{thm:one-div-models}.  Choose $\epsilon>0$ such that
  $\Theta:=\Delta_Y-\epsilon D$ is effective.  Note that
  \[
    -D\sim_{g,\r} K_Y+\Theta+(1-\epsilon)(-D), \eqno{(\ref{DB.epsilon.thm.klt.pf}.1)}
  \]
  $(Y, \Theta)$ is klt.
 If $-D$ is  $g$-nef, then 
  Grauert-Riemenschneider vanishing applies to  $R^i g_*\o_Y(-D)$.
   We can achive these 
after running a suitable MMP; see 
 Lemma~\ref{mmp.on.crep.D.lem}.
Thus we may assume that 
 $\o_X(-V)\cong R g_*\o_Y(-D)$.
 $V$ is now 
 Du~Bois  by  (\ref{db.sings.props.say}.3). \qed
\end{say}

\begin{lem}  \label{DB.epsilon.div.lem}
  Theorem~\ref{DB.epsilon.thm} holds if $X$ is $\q$-factorial and 
  $V$ has pure codimension 1.
\end{lem}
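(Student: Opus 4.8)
The plan is to use the very definition of the log canonical gap to lower the coefficients of $\Delta$ along $V$ until $V$ becomes a union of log canonical centers, and then to quote Property~\ref{db.sings.props.say}.1.

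First I would unwind the numerical hypothesis. Since $V$ has pure codimension $1$, write $V=\bigcup_i D_i$ with the $D_i$ prime divisors. By Definition~\ref{crep.log.str.mld.defn} one has $\mld(D_i,X,\Delta)=1-\coeff_{D_i}\Delta$ and $\mld(V,X,\Delta)=\max_i\mld(D_i,X,\Delta)$; since $\lcg(n)\le\tfrac16<1$, the assumption $\mld(V,X,\Delta)<\lcg(n)$ forces every $D_i$ to be a component of $\Delta$ and, with $d_i:=\coeff_{D_i}\Delta$, to satisfy $d_i>1-\lcg(n)$.

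Next I would lower these coefficients to $1$. Decompose $\Delta=\sum_i d_iD_i+\Delta''$, where $\Delta''\ge 0$ has no component in common with $V$. As $X$ is $\q$-factorial, $K_X+\sum_i d_iD_i$ is $\q$-Cartier, and discarding the effective divisor $\Delta''$ can only raise discrepancies, so $(X,\sum_i d_iD_i)$ is $\q$-factorial and log canonical. Now choose $\epsilon$ with $\mld(V,X,\Delta)<\epsilon<\lcg(n)$, which is possible by hypothesis; then $d_i\ge 1-\mld(V,X,\Delta)>1-\epsilon$ for every $i$, and $\epsilon$ still enjoys property (\ref{epsilon.defn}.2), since that property is inherited by every positive value below $\lcg(n)$. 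Hence, writing $D:=\sum_i D_i$ for the reduced divisor supported on $V$, the pair $(X,D)$ is log canonical.

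Finally, each $D_i$ occurs in $D$ with coefficient $1$, so $\mld(D_i,X,D)=0$ and $D_i$ is a log canonical center of $(X,D)$; consequently $V=\bigcup_i D_i$ is a union of log canonical centers of the log canonical pair $(X,D)$, and Property~\ref{db.sings.props.say}.1 gives that $V$ is Du~Bois. I do not anticipate a genuine obstacle: the value $\lcg(n)$ was defined precisely so that this coefficient-lowering step works, and all the substance is hidden in Property~\ref{db.sings.props.say}.1. The one place that wants a moment's attention is the strict inequality above — the component realizing the maximum has $d_i=1-\mld(V,X,\Delta)$ exactly, so one must interpose $\epsilon$ strictly between $\mld(V,X,\Delta)$ and $\lcg(n)$ rather than attempt to use $\epsilon=\mld(V,X,\Delta)$ or, if one is unsure it is attained, $\epsilon=\lcg(n)$.
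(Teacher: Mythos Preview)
Your proof is correct and follows essentially the same approach as the paper: decompose $\Delta$ along the components of $V$, use $\q$-factoriality to drop the remaining part, apply the defining property of $\lcg(n)$ to push the coefficients up to $1$, and then invoke Property~\ref{db.sings.props.say}.1. The only cosmetic difference is that the paper applies Definition~\ref{epsilon.defn} directly with $\epsilon=\lcg(n)$ (which is legitimate since $\lcg(n)$ is defined as the \emph{largest} such $\epsilon$, not merely a supremum), whereas you interpose an intermediate value; both are fine.
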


\begin{proof} Write $V=\cup_{i\in I} D_i$ where the $D_i\subset X$ are irreducible divisors.  Note that $\mld(D, X, \Delta)=1-\coeff_{D}\Delta$ for any irreducible divisor $D$.
  Thus we can write  $\Delta=\sum_{i\in I} d_iD_i+\Delta'$ where $d_i>1-\lcg(n)$ and $D_i\not\subset\supp \Delta'$.
Since $X$ is $\q$-factorial, 
$(X, \sum_{i\in I}d_i D_i)$ is also lc, hence so is    $(X, \sum_{i\in I} D_i)$  by Definition~\ref{epsilon.defn}.
Note that each $D_i$ is a log canonical center of $(X, \sum_{i\in I} D_i)$,  so 
$\cup_iD_i$ is Du~Bois by  (\ref{db.sings.props.say}.1).
\end{proof}

\begin{prop} \cite[1.38]{kk-singbook}\label{thm:one-div-models} 
 Let  $(X,\Delta)$ be log canonical,  
and $\{E_i:i\in I\}$  finitely many exceptional divisors over $X$ such that
$-1\leq a(E_i,X,\Delta)< 0$. 
Then there is a $\q$-factorial, dlt 
modification  $g:\bigl(Y,\Delta_Y\bigr)\to (X,\Delta)$
such that
\begin{enumerate}
\item  the $\{E_i:i\in I\}$ are among the  exceptional divisors of $g$, and
\item every other exceptional divisor $F$ of $g$ has discrepancy $-1$. \qed
\end{enumerate}
\end{prop}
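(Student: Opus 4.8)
The plan is to follow the standard construction of crepant dlt modifications, adjusted so that the prescribed divisors $E_i$---which may have discrepancy strictly larger than $-1$---are retained rather than contracted. First I would choose a log resolution $h\colon X'\to X$ on which each $E_i$ appears as a divisor and for which $\ex(h)\cup\supp\bigl(h^{-1}_*\Delta\bigr)$ is a simple normal crossing divisor. Writing $a_E:=a(E,X,\Delta)$ for an $h$-exceptional prime divisor $E$, I would set
\[
  \Gamma\ :=\ h^{-1}_*\Delta\ +\ \sum_{i\in I}(-a_{E_i})\,E_i\ +\ \sum_{F} F,
\]
where the last sum runs over the $h$-exceptional prime divisors $F$ not among the $E_i$. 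Since $(X,\Delta)$ is log canonical we have $a_E\ge -1$, so all coefficients of $\Gamma$ lie in $[0,1]$, and, $\supp\Gamma$ being snc on the smooth $X'$, the pair $(X',\Gamma)$ is $\q$-factorial and dlt. Comparing $K_{X'}$ with $h^*(K_X+\Delta)$ gives the crepant-type identity
\[
  K_{X'}+\Gamma\ =\ h^*(K_X+\Delta)\ +\ G ,\qquad G\ :=\ \sum_{F}(1+a_F)\,F\ \ge\ 0,
\]
the sum again over $h$-exceptional primes $F\notin\{E_i\}$; the reason for giving $E_i$ the coefficient $-a_{E_i}$ rather than $1$ is precisely so that $E_i$ does not occur in $G$. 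Note that $G$ is supported on $\ex(h)$, and $\supp G$ consists exactly of those $F\notin\{E_i\}$ with $a_F>-1$.

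Next I would run the relative $(K_{X'}+\Gamma)$-MMP over $X$ (with scaling of an ample divisor over $X$). Since $h^*(K_X+\Delta)$ is $\q$-linearly trivial over $X$, this coincides with the $G$-MMP over $X$; its existence and termination is the standard input behind the construction of $\q$-factorial dlt modifications and follows from the MMP for dlt pairs, the essential feature being that $G$ is an effective divisor supported on $\ex(h)$ (cf.\ \cite{km-book}). I expect this termination statement to be the only genuinely nontrivial ingredient---the rest is the negativity lemma together with bookkeeping of coefficients. Every step of the MMP is $(K_{X'}+\Gamma)$-negative, hence negative against (the strict transform of) $G$ over $X$, so every contracted curve lies in $\supp G$; in particular a divisorial contraction can only contract a component of $G$. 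Therefore the divisors $h^{-1}_*\Delta$, the $E_i$, and the $h$-exceptional divisors of discrepancy $-1$ are never contracted. Writing $g\colon Y\to X$ for the resulting model and $\Delta_Y:=g_*\Gamma$, the pair $(Y,\Delta_Y)$ is $\q$-factorial and dlt, and $K_Y+\Delta_Y$ is $g$-nef.

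Finally, pushing the identity forward yields $K_Y+\Delta_Y=g^*(K_X+\Delta)+g_*G$, and $g_*G\ge 0$ is $g$-exceptional and, being the difference of the two $g$-nef divisors $K_Y+\Delta_Y$ and $g^*(K_X+\Delta)$, is itself $g$-nef. By the negativity lemma $g_*G\le 0$, hence $g_*G=0$: the divisor $G$ is entirely contracted, $K_Y+\Delta_Y=g^*(K_X+\Delta)$ (so $g$ is crepant), and the $g$-exceptional divisors are precisely the $E_i$ together with the $h$-exceptional divisors of discrepancy $-1$. This gives conclusions (1) and (2).
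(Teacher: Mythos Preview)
The paper does not prove this proposition: it is quoted verbatim from \cite[1.38]{kk-singbook} and closed with a \qed. Your argument is essentially the standard proof one finds there---take a log resolution on which the $E_i$ appear, give every other exceptional divisor coefficient $1$ and each $E_i$ coefficient $-a_{E_i}$ so that the discrepancy divisor $G$ is effective, $h$-exceptional, and supported away from the $E_i$ and the discrepancy $-1$ divisors; then run the $(K_{X'}+\Gamma)$-MMP over $X$ and use the negativity lemma to see that exactly the components of $G$ are contracted. Your claim that a divisorial step can only contract a component of $\supp G$ is correct (on a $\q$-factorial variety, any prime divisor other than the contracted one meets the extremal ray nonnegatively), though it deserves the one-line justification you omit. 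The only inaccuracy is bibliographic: termination of this MMP is not in \cite{km-book}; it needs BCHM or, in the generality used here, \cite{birkar11, MR3032329}, exactly the sources the paper itself invokes in Lemma~\ref{mmp.on.crep.D.lem}.
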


\begin{lem}\label{mmp.on.crep.D.lem}
  Let $f:Y\to Z$ be a projective morphism between normal spaces. Assume that
  $(Y, \Delta)$ is $\q$-factorial, dlt and $K_Y+\Delta\sim_{f,\q}0$.  Let $D$ be an
  effective $\z$-divisor such that $ \supp D\subset \supp \Delta$.  Then the
  $(-D)$-MMP runs and terminates in a good minimal model if $D$ does not dominate
  $Z$.
\end{lem}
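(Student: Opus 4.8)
The plan is to recast the $(-D)$-MMP as an ordinary MMP with scaling for a klt-type boundary. We may assume $Z$ is irreducible; then the hypothesis that $D$ does not dominate $Z$ says exactly that $D$ is vertical over $f$, so $D$ restricts to $0$ on the generic fibre. Since $\supp D\subset\supp\Delta$, choose $0<\epsilon\ll 1$ with $\Theta:=\Delta-\epsilon D\geq 0$; then $(Y,\Theta)$ is $\q$-factorial and dlt, and
\[
  -D\ \sim_{f,\q}\ \tfrac1{\epsilon}\bigl(K_Y+\Theta\bigr).
\]
Consequently the $(-D)$-MMP over $Z$ is, step by step, the $(K_Y+\Theta)$-MMP over $Z$. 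Moreover $K_Y+\Delta\sim_{f,\q}0$ forces $(K_Y+\Delta)\cdot R=0$ on every extremal ray $R$ contracted along the way, so each step is $(K_Y+\Delta)$-crepant; hence $(Y,\Delta)$ remains $\q$-factorial, dlt and relatively log trivial over $Z$ throughout.

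I would run this as the $(K_Y+\Theta)$-MMP over $Z$ with scaling of $\epsilon D$. Since $K_Y+\Theta+\epsilon D=K_Y+\Delta\sim_{f,\q}0$ is $f$-nef, the scaling begins at parameter $1$, and every scaling threshold stays equal to $1$ until $-D$ becomes $f$-nef (because $K_Y+\Theta+\lambda\epsilon D\sim_{f,\q}(\lambda-1)\epsilon D$ fails to be $f$-nef for $\lambda<1$ unless $-D$ already is); in other words the MMP simply contracts $(-D)$-negative extremal rays — all of them $(K_Y+\Delta)$-crepant — until $-D$ is $f$-nef. Existence of the contractions and flips, and termination of this MMP with scaling for a $\q$-factorial dlt pair, are provided by the standard results of the minimal model program; the essential point is that we are running an MMP with scaling issuing from a relatively log trivial model. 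Note also that over the generic point of $Z$ there is nothing to contract, since $D=0$ there, so the whole MMP is \emph{vertical} over $Z$.

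This MMP cannot terminate with a Mori fibre space. Indeed, if $\phi\colon Y_m\to W$ were the resulting fibration over $Z$, then $-D_m$ would be $\phi$-ample, so every $\phi$-contracted curve would meet $D_m$ negatively and therefore lie in $\supp D_m$; hence $\supp D_m$ would contain every positive-dimensional fibre of $\phi$, and these are dense in $Y_m$ — impossible for an effective divisor. So the MMP terminates with a minimal model $f_m\colon Y_m\to Z$ on which $-D_m$ is $f_m$-nef. To see that it is a \emph{good} minimal model, i.e. that $-D_m$ is $f_m$-semiample, it suffices to verify abundance for the dlt pair $(Y_m,\Theta_m)$ with $K_{Y_m}+\Theta_m\sim_{f_m,\q}-\epsilon D_m$; since goodness of a minimal model is detected on the generic fibre of $f_m$, and there $K_{Y_m}+\Theta_m$ restricts to a $\q$-linearly trivial divisor (as $D_m$ is vertical), this is immediate. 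Hence $-D_m\sim_{f_m,\q}0$ is $f_m$-semiample.

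The main obstacle is plainly the input from the minimal model program: termination of the MMP with scaling, and the relative abundance used for goodness of the output. Both are deep, but both are available in the present ``relatively log Calabi--Yau'' situation; granting them, the two geometric points — no Mori fibre space, and semiampleness of $-D_m$ — are short and use only that $D$ is effective and vertical. In the birational case, which is the one needed in (\ref{DB.epsilon.thm.klt.pf}), the lemma is considerably easier: no Mori fibre space can occur for dimension reasons, so one merely runs the $(-D)$-MMP until $-D$ is $f$-nef, after which Grauert--Riemenschneider vanishing for $R^if_*\o_Y(-D)$ concludes the argument.
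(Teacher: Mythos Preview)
Your reduction is exactly the paper's: identify the $(-D)$-MMP with the $(K_Y+\Delta-\epsilon D)$-MMP, observe that since $D$ is vertical the generic fibre of $(Y,\Delta-\epsilon D)\to Z$ coincides with that of $(Y,\Delta)\to Z$ and is therefore already a good minimal model, and then invoke \cite{birkar11,MR3032329}. The paper stops there; the Hacon--Xu theorem directly delivers termination with a good minimal model once the generic fibre has one, so your separate arguments ruling out a Mori fibre space and establishing semiampleness are redundant.

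They also contain errors. In the Mori-fibre-space step the sign is backwards: a $(-D)$-Mori fibre space $\phi\colon Y_m\to W$ has $D_m$ (not $-D_m$) $\phi$-ample, so $D_m\cdot C>0$ for contracted curves, which does \emph{not} force $C\subset\supp D_m$. The conclusion is still salvageable---an effective $\phi$-ample divisor meets every fibre, hence dominates $W$ and thus $Z$, contradicting verticality---but not by the argument you wrote. And your final assertion ``$-D_m\sim_{f_m,\q}0$'' is simply false: triviality of $K_{Y_m}+\Theta_m$ on the generic fibre does not give relative $\q$-triviality over $Z$ (take $f$ birational with $D$ an irreducible $f$-exceptional divisor and $-D$ already $f$-ample). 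What is needed, and what ``good minimal model'' means, is only that $-D_m$ be $f_m$-semiample; the step from ``good on the generic fibre'' to ``relatively good'' is precisely the content of \cite{MR3032329}, not something to be labelled ``immediate.''
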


\begin{proof} The $(-D)$-MMP is the same as the $(-\epsilon D)$-MMP, which in turn
  agrees with the $(K_Y+\Delta -\epsilon D)$-MMP since $K_Y+\Delta\sim_{f,\q}0$.

  If $f$ is birational, the $(K_Y+\Delta -\epsilon D)$-MMP runs and terminates by
  \cite{birkar11, MR3032329}.

  If $f$ is not birational, then the generic fiber of $(Y,\Delta -\epsilon D)\to X$
  is the same as the generic fiber of $(Y,\Delta)\to X$, and the latter is a good
  minimal model by assumption. Thus the MMP for $(Y,\Delta -\epsilon D)\to X$ runs
  and terminates by \cite{MR3032329}.

  The above references work for varieties; see \cite{dvp, k-nqmmp} for algebraic
  spaces of finite type, \cite{fujino2022minimal} for analytic spaces and
  \cite{lyu-mur} for the most general settings.
\end{proof}

\subsection*{Crepant log structures}{\ }

\noindent
For the general case of Theorem~\ref{DB.epsilon.thm}, we first study what the above
proof gives. Keeping in mind the inductive arguments of \cite{k-db}, we do this for
crepant log structures. The end result is Lemma~\ref{DB.epsilon.crep.lem}. Then
induction and repeated use of (\ref{db.sings.props.say}.2) completes the proof in
Proposition~\ref{W.db.ind.lem}.

\begin{defn}\label{crep.l.s.def}
  A  {\it crepant log structure} is a dominant, projective morphism with connected fibers
  $g:(Y, \Delta)\to Z$,  where  $(Y, \Delta)$ is lc, $Z$ is normal  and
  $K_Y+ \Delta\sim_{g, \r}0$.

  If $(X, \Delta)$ is lc, then the identity
  $(X, \Delta)\to X$ is a crepant log structure.

  As a generalization of (\ref{db.sings.props.say}.1), $Z$ is Du~Bois
  \cite[6.31]{kk-singbook}.

  For an irreducible $V\subset Z$ we define $\mld(V,Y,\Delta)$ as the infimum of the
  numbers $1+a(E,Y,\Delta)$ where $E$ runs through all divisors over $Y$ that
  dominate $V$.

  As in Definition~\ref{crep.log.str.mld.defn}, if $V\subset Z$ is a closed subset
  with irreducible components $V_i$, then we set
  $\mld(V,Y,\Delta):=\max_i \bigl\{\mld(V_i,Y,\Delta)\bigr\}$.

  We will use the following property proved in \cite{k-1/6}, see also
  \cite[7.5]{kk-singbook}.
\begin{equation}
  \label{eq:2}
  \mld(V_1\cap V_2,Y,\Delta)\leq \mld(V_1,Y,\Delta)+\mld(V_2,Y,\Delta).
\end{equation}
\end{defn}

The following generalization of Theorem~\ref{DB.epsilon.thm} is better suited for
induction.

\begin{thm}\label{DB.epsilon.crep.thm}
  Let $g:(Y, \Delta)\to Z$ be a crepant log structure. Set $n=\dim Y$ and let
  $V\subset Z$ be a closed subset such that $\mld(V, Y,\Delta)<\lcg(n)$.  Then $V$
  has Du~Bois singularities.
\end{thm}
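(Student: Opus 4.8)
I would prove Theorem~\ref{DB.epsilon.crep.thm} by an induction on $\dim V$, reducing it step by step to the two special cases already treated: the divisorial case (Lemma~\ref{DB.epsilon.div.lem}, which is really about crepant log structures after the dlt modification trick) and, at the bottom of the induction, the case of log canonical centers themselves, handled by Property~\ref{db.sings.props.say}.1. The backbone of the induction is the inclusion-exclusion property \ref{db.sings.props.say}.2: if three of $\{X_1\cap X_2, X_1, X_2, X_1\cup X_2\}$ are Du~Bois, so is the fourth. The subadditivity inequality \eqref{eq:2}, $\mld(V_1\cap V_2,Y,\Delta)\leq \mld(V_1,Y,\Delta)+\mld(V_2,Y,\Delta)$, is what makes the induction close up: intersecting two subsets with small mld keeps the mld small enough, but only if one is willing to pass to a subset whose mld is \emph{strictly} less, which is exactly why the strict inequality $<\lcg(n)$ in the hypothesis is essential.

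\textbf{First steps.} First I would dispose of the case where $V$ has pure codimension one in $Z$. After replacing $(Y,\Delta)\to Z$ by a $\q$-factorial dlt modification via Proposition~\ref{thm:one-div-models}, arranged so that the preimage $D$ of $V$ is a divisor with $\mld(D, Y, \Delta_Y)=\mld(V, Y, \Delta)$ and $\supp D\subset\supp\Delta_Y$, I would run the $(-D)$-MMP, which terminates in a good minimal model by Lemma~\ref{mmp.on.crep.D.lem} (note $D$ cannot dominate $Z$ since over the generic point of $Z$ the mld is positive, forcing $\rdown{\Delta_Y}$ not to dominate). On the minimal model $-D$ is $g$-semiample, so Theorem~\ref{1/6.sn.lem} (or the simpler Grauert--Riemenschneider argument in its proof) gives $\o_Z(-V)\cong Rg_*\o_Y(-D)$ with a left inverse. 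Combined with the fact that $D$, being a union of divisors with coefficient $>1-\lcg(n)$ in an lc pair, is a union of lc centers of $(Y,\supp D)$ hence Du~Bois by \ref{db.sings.props.say}.1, Property~\ref{db.sings.props.say}.3 yields that $V$ is Du~Bois. This is precisely the content of the klt-case proof sketched in \ref{DB.epsilon.thm.klt.pf}, reorganized for crepant log structures.

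\textbf{The inductive step.} Now suppose $V$ has dimension $k$ and every $V'\subset Z$ of dimension $<k$ with $\mld(V',Y,\Delta)<\lcg(n)$ is known to be Du~Bois. Write $V=\bigcup_i V_i$ with $V_i$ the top-dimensional components together with the lower-dimensional ones. The strategy is: replace $V$ by $V^+:=V\cup W$ where $W$ is a union of lc centers chosen so that $V^+$ is ``divisorial over'' a dlt modification — more precisely, using Proposition~\ref{thm:one-div-models} one finds $g:(Y,\Delta_Y)\to Z$ with $V=g(\supp D)$ for a divisor $D$ with $\supp\rdown{\Delta_Y}\subset\supp D\subset\supp\Delta_Y$ and the discrepancy computation preserved. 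Then the divisorial argument above shows $g(\supp D)$ is Du~Bois, but $g(\supp D)$ may be strictly larger than $V$: it also contains the images of the exceptional divisors $F$ with $a(F,X,\Delta)=-1$. Writing $V^+ = V\cup V^*$ where $V^*$ collects those extra pieces (each a union of lc centers, hence Du~Bois), and using that $V\cap V^*$ has dimension $<k$ with, by \eqref{eq:2}, $\mld(V\cap V^*,Y,\Delta)\leq \mld(V,Y,\Delta)+\mld(V^*,Y,\Delta)=\mld(V,Y,\Delta)+0<\lcg(n)$, the inductive hypothesis makes $V\cap V^*$ Du~Bois; then \ref{db.sings.props.say}.2 applied to $\{V\cap V^*, V, V^*, V^+\}$ — three of which ($V^*$, $V^+$, $V\cap V^*$) are now Du~Bois — concludes that $V$ is Du~Bois. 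This is presumably packaged as Lemma~\ref{DB.epsilon.crep.lem} and Proposition~\ref{W.db.ind.lem} in the paper.

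\textbf{Main obstacle.} The delicate point I expect to be hardest is the bookkeeping that produces a dlt modification $g$ under which $V$ becomes exactly the image of a suitable divisor $D$ \emph{without inflating the mld}: one must choose which exceptional divisors $E_i$ with $-1\le a(E_i,X,\Delta)<0$ to extract so that (a) each irreducible component $V_i$ of $V$ is dominated by some extracted $E_i$ realizing (close to) $\mld(V_i,Y,\Delta)$, and (b) $\supp D\subset\supp\Delta_Y$ so that Lemma~\ref{mmp.on.crep.D.lem} and Theorem~\ref{1/6.sn.lem} apply — all while keeping track of the finitely-generatedness needed to invoke Proposition~\ref{thm:one-div-models}. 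A secondary subtlety is checking that the crepant-log-structure hypothesis ($K_Y+\Delta\sim_{g,\r}0$, $g$ projective with connected fibers) is stable under the dlt modification and the MMP, so that the induction genuinely stays within the class of crepant log structures; this is where one leans on the general MMP references ($\q$-factorial dlt, relative MMP over a base) cited in Lemma~\ref{mmp.on.crep.D.lem}, together with the relative abundance built into the definition of crepant log structure.
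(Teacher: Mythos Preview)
Your overall architecture---extract divisors via Proposition~\ref{thm:one-div-models}, run the MMP of Lemma~\ref{mmp.on.crep.D.lem}, apply the splitting of Theorem~\ref{1/6.sn.lem} and Property~\ref{db.sings.props.say}.3, then peel off the extra lc centers with Property~\ref{db.sings.props.say}.2 and subadditivity \eqref{eq:2}---matches the paper's. But the induction as you set it up does not close, and the missing step is not bookkeeping; it is a genuine extra layer of argument.

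The problem is your choice of induction parameter. You induct on $\dim V$ and claim that $V\cap V^*$ has dimension $<k$. This fails whenever a top-dimensional component $V_i$ of $V$ happens to lie inside an lc center: then $V_i\subset V^*$, so $\dim(V\cap V^*)=k$ and your inductive hypothesis is unavailable. (Concretely: take $V$ to be an irreducible curve sitting inside a $2$-dimensional lc center of a $3$-fold pair.) The paper resolves this by inducting not on $\dim V$ but on $\dim Z$, where $Z$ is allowed to be any union of lc centers containing $V$ (this is the extra generality in Proposition~\ref{W.db.ind.lem}). Since $Z^{\diamond}$ is by construction nowhere dense in $Z$, one always has $\dim Z^{\diamond}<\dim Z$, and both $V_1\cap Z^{\diamond}$ and $V\cap Z^{\diamond}$ become subsets of the lower-dimensional $Z^{\diamond}$ to which induction applies---regardless of their own dimension.

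There is a second, related gap. Your ``divisorial argument'' silently assumes that after the dlt modification $\rdown{\Delta_Y}$ does not dominate $Z$, i.e.\ that the generic fiber of $g$ is klt; otherwise $\supp\rdown{\Delta_Y}\subset\supp D$ forces $D$ to dominate $Z$ and Theorem~\ref{1/6.sn.lem} does not apply. Once you strengthen the inductive statement to allow $Z$ to be a union of lc centers, the generic fiber over a component $Z_i$ is essentially never klt. The paper's Corollary~\ref{W+lc.db.lem} handles this by adjunction: for each $Z_i$ one passes to a minimal lc center $Y_i\subset Y$ dominating it, takes the different $\Theta_i=\diff^*_{Y_i}\Delta$, and obtains via Stein factorization a new crepant log structure $(Y_i,\Theta_i)\to\widetilde Z_i$ that \emph{does} have klt generic fiber; precise inversion of adjunction transports the mld bound, and the normalized trace map provides the splittings needed to descend the Du~Bois property back down along $\widetilde Z_i\to \bar Z_i\to Z_i$. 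This adjunction-and-trace step is the substantive content you are missing, and without it the induction cannot leave the klt-generic-fiber regime.
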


\noindent
Next we see what the method of (\ref{DB.epsilon.thm.klt.pf}) gives for crepant log
structures.

\begin{notation}\label{nota.not.1}
  Let $g:(Y, \Delta)\to Z$ be a crepant log structure.  For a closed subset
  $Z_1\subset Z$, let $Z_1^{\diamond}\subset Z_1$ denote the union of those log
  canonical centers of $(Y,\Delta)$ that are contained in $Z_1$, but are nowhere
  dense in it.

  Note that $Z^{\diamond}$ is the non-klt locus of $g:(Y, \Delta)\to Z$.

  If $Z_1$ itself is the union of log canonical centers of $(Y,\Delta)$, then $Z_1$
  is seminormal and $Z_1\setminus Z_1^{\diamond}$ is normal by \cite{ambro, fuj-book}
  and \cite[4.32]{kk-singbook}.
\end{notation}

 \begin{lem} \label{DB.epsilon.crep.lem}
  Let $g:(Y, \Delta)\to Z$ be a crepant log structure with klt generic fiber. Set    $n=\dim Y$ and let $V\subset Z$ be a closed subset such that   $\mld(V, Y,\Delta)<\lcg(n)$.
Then $V\cup Z^{\diamond}$  has Du~Bois singularities.
\end{lem}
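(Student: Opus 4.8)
The plan is to mimic the klt argument in \ref{DB.epsilon.thm.klt.pf}, but carrying the crepant log structure along and handling the non-klt locus $Z^{\diamond}$ simultaneously with $V$. First I would reduce to the case where $Y$ is $\q$-factorial dlt by passing to a dlt modification $h:(Y',\Delta')\to (Y,\Delta)$; this is crepant, so $g':=g\circ h:(Y',\Delta')\to Z$ is again a crepant log structure, and by \ref{thm:one-div-models} applied to the divisors computing $\mld(V_i,Y,\Delta)$ I can arrange that $D':=(g')^{-1}(V\cup Z^{\diamond})$ is a divisor whose relevant components carry coefficient $>1-\lcg(n)$ in $\Delta'$, while $\mld(V\cup Z^{\diamond},Y',\Delta')=\mld(V\cup Z^{\diamond},Y,\Delta)$. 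Note that $\mld(Z^{\diamond},Y,\Delta)=0$ since $Z^{\diamond}$ is a union of lc centers, so by \eqref{eq:2} the inequality $\mld(V,Y,\Delta)<\lcg(n)$ upgrades to $\mld(V\cup Z^{\diamond},Y,\Delta)<\lcg(n)$, which is what lets me treat the two subsets as one closed set.

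Next I would set $V^*:=V\cup Z^{\diamond}$ and choose $\epsilon>0$ with $\Theta:=\Delta'-\epsilon D'$ effective. Then
\[
  -D'\sim_{g',\r} K_{Y'}+\Theta+(1-\epsilon)(-D'),
\]
and $(Y',\Theta)$ is klt: every lc center of $(Y',\Delta')$ meets $\supp D'$ (by construction $D'$ contains $(g')^{-1}(Z^{\diamond})$, hence all lc centers not dominating $Z$, and the generic fiber is klt by hypothesis), so subtracting $\epsilon D'$ removes all the lc centers. Since $D'$ does not dominate $Z$ (its image is $V^*\subsetneq Z$), Lemma~\ref{mmp.on.crep.D.lem} applies to $f=g'$: the $(-D')$-MMP over $Z$ runs and terminates in a model on which $-D'$ is $g'$-semiample, in particular $g'$-nef. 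After this MMP, Grauert--Riemenschneider vanishing (via \ref{1/6.sn.lem}.1 with $\Theta$ klt and $(1-\epsilon)(-D')$ $g'$-semiample, giving the birational-case argument of \ref{1/6.sn.lem}) yields $R^ig'_*\o_{Y'}(-D')=0$ for $i>0$, so $\o_Z(-V^*)\cong Rg'_*\o_{Y'}(-D')$; equivalently $\o_Z(-V^*)\to Rg'_*\o_{Y'}(-D')$ has a left inverse. One subtlety is that running the MMP may change the model, so I should phrase the conclusion for the final model and note that $V^*=g'(\supp D')$ is unchanged.

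To apply Property~\ref{db.sings.props.say}.3 with $p=g'$, $X=Z$, $V=V^*$, $D=\supp(g')^{-1}(V^*)=\supp D'$, I need $Y'$ and $D'$ to be Du~Bois. For $Y'$: it is dlt, hence (taking $\lfloor\Delta'\rfloor$ as the union of lc centers of the divisorial part, or directly) $Y'$ has rational, in particular Du~Bois, singularities. For $D'=\supp D'$: the components of $D'$ with coefficient $1$ in $\Delta'$ form a union of lc centers and are Du~Bois by \ref{db.sings.props.say}.1; the remaining components of $D'$ are the $g'$-exceptional or low-coefficient divisors introduced to make $\mld$ match—here I would argue inductively, either by noting $\dim\supp D'<n$ so the crepant-log-structure version of the theorem for lower dimension applies (an induction on $n$ built into Theorem~\ref{DB.epsilon.crep.thm}), or by a direct dlt argument that $\lfloor\Delta'\rfloor=\supp D'$ after the modification, making $D'$ itself a union of lc centers. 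I expect this last point—verifying that $D'$ is Du~Bois, i.e., that the dlt modification can be chosen so that \emph{every} component of $(g')^{-1}(V^*)$ is an lc center of $(Y',\Delta')$—to be the main obstacle; the clean way is to build Lemma~\ref{DB.epsilon.crep.lem} and Theorem~\ref{DB.epsilon.crep.thm} into a single induction on $n$, so that the Du~Bois-ness of the lower-dimensional $\supp D'$ is available as the inductive hypothesis. Once $Y'$ and $D'$ are Du~Bois, \ref{db.sings.props.say}.3 gives: $Z$ Du~Bois $\iff$ $V^*$ Du~Bois; but $Z$ is Du~Bois by the crepant log structure property cited in \ref{crep.l.s.def}, hence $V^*=V\cup Z^{\diamond}$ is Du~Bois, as claimed.
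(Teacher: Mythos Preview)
Your overall architecture is exactly the paper's: pass to a $\q$-factorial dlt model via Proposition~\ref{thm:one-div-models}, run the $(-D)$-MMP of Lemma~\ref{mmp.on.crep.D.lem} to make $-D$ semiample, obtain the left inverse, and conclude with Property~\ref{db.sings.props.say}.3 using that $Z$ is Du~Bois. Two points, however, need correction.

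First, you invoke Grauert--Riemenschneider to get $R^ig'_*\o_{Y'}(-D')=0$ for $i>0$, citing ``the birational-case argument of \ref{1/6.sn.lem}.'' But $g'$ is a crepant log structure with klt generic fiber, not a birational map; its fibers can be positive-dimensional, and Grauert--Riemenschneider does not apply. In this situation the higher direct images need not vanish, and you only get a \emph{left inverse} of $\o_Z(-V^*)\to Rg'_*\o_{Y'}(-D')$, not a quasi-isomorphism. This is precisely why Theorem~\ref{1/6.sn.lem} separates the two cases and, in the non-birational one, appeals to the splitting results of \cite{k-hdi2} (cf.\ \cite[10.41]{kk-singbook}). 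You should invoke the full Theorem~\ref{1/6.sn.lem}, not just its birational half.

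Second, you flag ``$D'$ is Du~Bois'' as the main obstacle and propose either an induction on $n$ or arranging $\rdown{\Delta'}=\supp D'$. Neither is needed, and the second is not achievable: the divisors you extract over the $V_i$ have coefficient strictly between $1-\lcg(n)$ and $1$ in $\Delta'$, so they never land in $\rdown{\Delta'}$. The point you are missing is that Lemma~\ref{DB.epsilon.div.lem} already does the job. On the final model, $Y'$ is $\q$-factorial, $(Y',\Delta')$ is lc, and $D'$ is a pure codimension-$1$ closed subset with $\mld(D',Y',\Delta')<\lcg(n)$; Lemma~\ref{DB.epsilon.div.lem} then says $D'$ is Du~Bois outright. (Concretely: each component of $D'$ has coefficient $>1-\lcg(n)$ in $\Delta'$, so by the definition of $\lcg(n)$ the pair $(Y',\red D')$ is lc and $D'$ is a union of its lc centers, hence Du~Bois by Property~\ref{db.sings.props.say}.1.) With $Y'$ Du~Bois (dlt implies rational implies Du~Bois) and $D'$ Du~Bois by Lemma~\ref{DB.epsilon.div.lem}, Property~\ref{db.sings.props.say}.3 applies cleanly, exactly as in the paper.
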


\begin{proof} By Proposition~\ref{thm:one-div-models}, we may assume that
  $(Y, \Delta)$ is $\q$-factorial, dlt, and there is a divisor $ D=\sum D_i\subset Y$
  such that $\rdown{\Delta}\subset D$, $\mld(D, Y, \Delta)<\lcg(n)$, and
  $g(D)=V\cup Z^{\diamond}$.  After running the MMP for $K_Y+ \Delta-\epsilon D$ for
  some $\epsilon>0$ as in Lemma~\ref{mmp.on.crep.D.lem}, we may also assume that $-D$
  is $g$-semiaple. As in (\ref{1/6.sn.lem}.1),
  \[
    -D\sim_{g,\r} K_Y+(\Delta-\epsilon D)+(1-\epsilon)(-D),
    \eqno{(\ref{DB.epsilon.crep.lem}.1)}
  \]
  where $(Y, \Delta-\epsilon D)$ is klt and $(1-\epsilon)(-D)$ is $g$-semiample.
  Also note that $D=\supp g^{-1}(V)$, since $-D$ is $g$-nef and the fibers are
  connected. We can now use Theorem~\ref{1/6.sn.lem} to get that
  \[
    \o_Z\bigl(-(V\cup Z^{\diamond})\bigr)\to R g_*\o_Y(-D)
    \eqno{(\ref{DB.epsilon.crep.lem}.2)}
  \]
  has a left inverse.  As we noted in Definition~\ref{crep.l.s.def}, $Z$ is Du~Bois.
  By (\ref{db.sings.props.say}.3) these imply that $V\cup Z^{\diamond}$ is Du~Bois.
\end{proof}

 \begin{cor}\label{W+lc.db.lem}
   Let $g:(Y,\Delta)\to X$ be a crepant log structure of dimension $n$, and
   $Z\subset X$ a union of some of its log canonical centers.  Let $V\subset Z$ be a
   closed subset such that $\mld(V, Y,\Delta)<\lcg(n)$. Then $V\cup Z^{\diamond}$ is
   Du~Bois.
\end{cor}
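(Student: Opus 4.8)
The plan is to reduce Corollary~\ref{W+lc.db.lem} to Lemma~\ref{DB.epsilon.crep.lem} by restricting the crepant log structure over the locus where the generic fiber fails to be klt, and then to splice together the pieces using the inclusion--exclusion property (\ref{db.sings.props.say}.2). The point is that in Lemma~\ref{DB.epsilon.crep.lem} one assumes the generic fiber of $g$ is klt, i.e. $Z^\diamond$ is a proper closed subset and is exactly the non-klt locus; here $Z$ is an arbitrary union of log canonical centers, so $Z$ may well sit inside $Z^\diamond$, and we cannot apply the lemma to $g$ itself. What we can do is stratify by the lc centers contained in $Z$.

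First I would set up the induction on $\dim Z$ (equivalently on the number of log canonical centers in play). If $Z$ is not itself an lc center, write $Z = \bigcup_j W_j$ as the union of the maximal lc centers of $(Y,\Delta)$ contained in $Z$; by \cite[4.32]{kk-singbook} (cited in Notation~\ref{nota.not.1}) each $W_j$ is normal away from $W_j^\diamond$ and $Z$ is seminormal. The key construction: for each maximal lc center $W=W_j$, the crepant log structure $g:(Y,\Delta)\to Z$ induces, via adjunction / the canonical bundle formula for the lc center $W$ (as in \cite{k-db, kk-singbook}), a crepant log structure $g_W:(Y_W,\Delta_W)\to W$ with $\dim Y_W < n$, whose non-klt locus is $W^\diamond$, and such that $\mld(V\cap W, Y_W, \Delta_W)\le \mld(V\cap W, Y,\Delta) + (\text{contribution from }W^\diamond)$. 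Because $W$ is an lc center, the relevant discrepancies only improve, and in particular $\mld(V\cap W, Y_W,\Delta_W)\le \mld(V,Y,\Delta)<\lcg(n)$, while the relevant dimension drops, so $\lcg$ can only get larger. Applying Lemma~\ref{DB.epsilon.crep.lem} to $g_W$ gives that $(V\cap W)\cup W^\diamond$ is Du~Bois. One checks that $W^\diamond\subset Z^\diamond$ and that $W^\diamond$ together with the lower-dimensional lc centers accounts for $Z\cap (\text{other }W_{j'})$, so these pieces glue.

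Then I would run the inclusion--exclusion. Write $V\cup Z^\diamond$ as a finite union of the sets $A_j:=(V\cap W_j)\cup W_j^\diamond$ (each Du~Bois by the previous paragraph) together with $V^\diamond$-type corrections; the pairwise intersections $A_j\cap A_{j'}$ are again of the same form for the lc center $W_j\cap W_{j'}$ (using $(\ref{eq:2})$ to control the mld of $V$ restricted to the intersection — note that on intersections the mld can only go up by a bounded amount, and the ambient dimension drops, so the hypothesis $\mld<\lcg$ is preserved), hence Du~Bois by the inductive hypothesis on dimension. A repeated application of (\ref{db.sings.props.say}.2) — exactly the ``3 out of 4 Du~Bois implies the 4th'' Mayer--Vietoris pattern used in Proposition~\ref{W.db.ind.lem} — then yields that the full union $V\cup Z^\diamond$ is Du~Bois. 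The base case $\dim Z = 0$ is trivial since points are smooth, hence Du~Bois.

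The main obstacle I expect is bookkeeping the adjunction step cleanly: verifying that restricting a crepant log structure to a maximal lc center $W$ again produces a \emph{crepant} log structure $g_W:(Y_W,\Delta_W)\to W$ of strictly smaller dimension, and that the minimal log discrepancy of $V\cap W$ computed in this new structure is still $<\lcg(\dim Y_W)$. This requires the divisorial adjunction / canonical bundle formula machinery and the monotonicity $\lcg(m)\ge\lcg(n)$ for $m\le n$ (clear from Definition~\ref{epsilon.defn}, since fewer dimensions impose no new constraints). The inequality $(\ref{eq:2})$ is precisely what makes the intersection terms survive the induction; the deformation-theoretic and vanishing inputs are all packaged in Lemma~\ref{DB.epsilon.crep.lem} and Theorem~\ref{1/6.sn.lem}, so the corollary is genuinely a formal consequence once the adjunction/dimension-drop is in place.
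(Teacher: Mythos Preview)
Your high-level plan---restrict to each irreducible lc center $W_j$ of $Z$, use adjunction to produce a smaller crepant log structure with klt generic fiber, apply Lemma~\ref{DB.epsilon.crep.lem}, then glue---is the right idea and matches the paper. But there is a genuine gap at the adjunction step. A crepant log structure requires a \emph{normal} base and \emph{connected} fibers, and $W=W_j$ need not be normal, nor need $Y_W\to W$ have connected fibers. What adjunction actually produces (this is what the paper does) is: take a minimal lc center $Y_i\subset Y$ dominating $W$, set $\Theta_i=\diff^*_{Y_i}\Delta$, and Stein-factor $Y_i\to \widetilde W\to \bar W\to W$ through the normalization. The crepant log structure with klt generic fiber lives over $\widetilde W$, and Lemma~\ref{DB.epsilon.crep.lem} yields that $\widetilde V\cup\widetilde W^{\diamond}$ is Du~Bois upstairs on $\widetilde W$. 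You still have to descend this to $W$. The paper does this in two steps, both via Property~\ref{db.sings.props.say}.3: first from $\widetilde W$ to $\bar W$ using the normalized trace map to split $\o_{\bar W}\into\tau_*\o_{\widetilde W}$ (and the corresponding ideal sheaves), and then from $\amalg_j\bar W_j$ to $Z$ using seminormality of $Z$ and normality of $Z\setminus Z^{\diamond}$ to get $\o_Z\bigl(-(V\cup Z^{\diamond})\bigr)=\pi_*\o_{\bar Z}\bigl(-(\bar V\cup\bar Z^{\diamond})\bigr)$. This descent is the substance of the corollary; your sketch jumps directly from ``apply Lemma~\ref{DB.epsilon.crep.lem} to $g_W$'' to ``$(V\cap W)\cup W^{\diamond}$ is Du~Bois'' without it.

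On the gluing: your Mayer--Vietoris plan via Property~\ref{db.sings.props.say}.2 is a legitimate alternative to the paper's second descent step, and in fact simpler than you make it sound---if $A_j=(V\cap W_j)\cup W_j^{\diamond}$ then $A_j\cap A_{j'}=W_j\cap W_{j'}$ on the nose (since $W_j\cap W_{j'}\subset W_j^{\diamond}$), which is a union of lc centers and hence Du~Bois by Property~\ref{db.sings.props.say}.1; no induction or (\ref{eq:2}) is needed there. But this does not rescue the argument, because you still need the trace-map descent from $\widetilde W_j$ to $W_j$ to know that each $A_j$ is Du~Bois in the first place.
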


\begin{proof} We may assume that $(Y,\Delta)$ is $\q$-factorial and dlt.

  For each irreducible component $Z_i\subset Z$, let $Y_i\subset Y$ be a minimal
  dimensional log canonical center of $(Y,\Delta)$ that dominates $Z_i$. Set
  $\Theta_i:=\diff^*_{Y_i}\Delta$ as in \cite[4.18.4]{kk-singbook}.
   
  Let $\pi_i:\bar Z_i\to Z_i$ denote the normalization. Stein factorization of
  $Y_i\to Z_i$ gives $g_i: Y_i\to \widetilde Z_i$ and
  $\tau_i:\widetilde Z_i\to \bar Z_i$.  The
  $g_i: \bigl(Y_i, \Theta_i\bigr)\to \widetilde Z_i$ are crepant log structures with
  klt general fibers.

  Precise inversion of adjunction \cite[7.10]{kk-singbook} shows that
  $(\pi_i\circ \tau_i)^{-1}(Z^{\diamond})=(\widetilde Z_i)^{\diamond}$ and
  $\mld\bigl(\widetilde V_i, Y_i, \Theta_i\bigr)\leq \mld(V, Y,\Delta)$, where
  $\widetilde V_i:=(\pi_i\circ \tau_i)^{-1}(V)$.

  The $\widetilde Z_i$ are Du~Bois by (\ref{db.sings.props.say}.1), and the
  $\widetilde V_i\cup\widetilde Z_i^{\diamond}$ are Du~Bois by
  Lemma~\ref{DB.epsilon.crep.lem}.

  Set $\bar V_i:=\pi_i^{-1}(V)$, $\bar Z_i^{\diamond}:=\pi_i^{-1}(Z^{\diamond})$ and
  $\bar Z^{\diamond}:=\cup_i \bar Z_i^{\diamond}$.
  
  The normalized trace map splits $\o_{\bar Z_i}\into (\tau_i)_*\o_{\widetilde Z_i}$,
  and hence also splits
  \[
    \o_{\bar Z_i}\bigl(-(\bar V_i\cup\bar Z_i^{\diamond})\bigr)\into
    (\tau_i)_*\o_{\widetilde Z_i}\bigl(-(\widetilde V_i\cup\widetilde
    Z_i^{\diamond})\bigr).
  \]
  Using the first splitting and applying (\ref{db.sings.props.say}.3) to
  $\bigl(\widetilde Z_i, \emptyset\bigr)\to \bigl(\bar Z_i, \emptyset\bigr)$ shows
  that $\bar Z_i$ is Du~Bois.  Applying (\ref{db.sings.props.say}.3) and the second
  splitting now gives that $\bar V_i\cup \bar Z_i^{\diamond}$ is Du~Bois.

  As we noted in (\ref{nota.not.1}), $Z$ is seminormal and normal outside
  $Z^{\diamond}$, thus $\o_Z(-Z^{\diamond})=\pi_*\o_{\bar Z}(-\bar Z^{\diamond})$,
  hence
  \[
    \o_Z\bigl(-(V\cup Z^{\diamond})\bigr)=\pi_*\o_{\bar Z}\bigl(-(\bar V\cup\bar
    Z^{\diamond})\bigr).
  \]
  Since $Z^{\diamond}$ is Du~Bois by induction on the dimension, the first splitting
  shows that $Z$ is Du~Bois. Using (\ref{db.sings.props.say}.3) and the second
  splitting gives that $V\cup Z^{\diamond}$ is Du~Bois.
\end{proof}

\subsection*{Proof of Theorems~\ref{DB.epsilon.thm} and \ref{DB.epsilon.crep.thm}}{\
}

\noindent
Since Theorem~\ref{DB.epsilon.crep.thm} implies Theorem~\ref{DB.epsilon.thm},
all that remains is to formulate a variant of Theorem~\ref{DB.epsilon.crep.thm}
that allows for  induction on the dimension.
The strongest version would  use the language of quasi-log structures as in \cite{fuj-book}. They appear  implicitly in 
the proof of  Proposition~\ref{W.db.ind.lem}, but our approach works well enough.

Note that Theorem~\ref{DB.epsilon.crep.thm} is the $Z=X$ special case of
Proposition~\ref{W.db.ind.lem}. Thus the proof of Proposition~\ref{W.db.ind.lem}
yields Theorem~\ref{DB.epsilon.crep.thm}.

\begin{prop}\label{W.db.ind.lem}    Let  $g:(Y,\Delta)\to X$ be a  crepant log structure  of dimension $n$, and $Z\subset X$  a union of some of its log canonical centers, allowing $Z=X$.  Let
  $V\subset Z$ be a closed subset such that $\mld(V, Y,\Delta)<\lcg(n)$. Then
  $V$ is Du~Bois.
\end{prop}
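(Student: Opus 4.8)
The plan is to prove Proposition~\ref{W.db.ind.lem} by induction on $\dim Z$ (or equivalently on $\dim X$), using Corollary~\ref{W+lc.db.lem} as the engine that provides $V\cup Z^{\diamond}$ and then peeling off the correction term $Z^{\diamond}$. First I would set up the induction: when $\dim Z=0$ the claim is trivial since a reduced point is smooth, hence Du~Bois. For the inductive step, I may freely assume (by Proposition~\ref{thm:one-div-models}) that $(Y,\Delta)$ is $\q$-factorial and dlt, so that $Z^{\diamond}\subset Z$ is a well-defined union of log canonical centers of $(Y,\Delta)$, and by Corollary~\ref{W+lc.db.lem} the union $V\cup Z^{\diamond}$ is Du~Bois.

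Next I would analyze $Z^{\diamond}$ itself. It is a union of log canonical centers of the crepant log structure $g:(Y,\Delta)\to X$, so Property~(\ref{db.sings.props.say}.1) — or rather its crepant version together with the fact that each such center is again the base of an induced crepant log structure of \emph{strictly smaller} dimension — shows $Z^{\diamond}$ is Du~Bois; alternatively one invokes the inductive hypothesis with $V=\emptyset$ (or $V=Z^{\diamond}$) applied to $Z^{\diamond}$ in place of $Z$, since $\dim Z^{\diamond}<\dim Z$ by the "nowhere dense" clause in Notation~\ref{nota.not.1}. The third ingredient is the intersection $V\cap Z^{\diamond}$. Here is where the log canonical gap enters a second time: by the subadditivity (\ref{eq:2}), writing $V$ and $Z^{\diamond}$ via their components and using that $Z^{\diamond}$ consists of log canonical centers (so $\mld(Z^{\diamond},Y,\Delta)=0$), we get $\mld(V\cap Z^{\diamond},Y,\Delta)\le \mld(V,Y,\Delta)+0<\lcg(n)$. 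Since $V\cap Z^{\diamond}\subset Z^{\diamond}$ is a closed subset of the smaller-dimensional $Z^{\diamond}$, and $Z^{\diamond}$ carries an induced crepant log structure of dimension $<n$ (so $\lcg$ for it is $\geq\lcg(n)$, as $\lcg$ is non-increasing), the inductive hypothesis applies and shows $V\cap Z^{\diamond}$ is Du~Bois.

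Finally I would assemble the pieces with Property~(\ref{db.sings.props.say}.2): among the four subspaces $\{V\cap Z^{\diamond},\ V,\ Z^{\diamond},\ V\cup Z^{\diamond}\}$ we now know three — namely $V\cap Z^{\diamond}$, $Z^{\diamond}$, and $V\cup Z^{\diamond}$ — are Du~Bois, hence so is the fourth, $V$. This completes the induction. The main obstacle I anticipate is bookkeeping the induced crepant log structures on $Z^{\diamond}$ and its components with the correct dimension and $\mld$ bounds: one must check that restricting $g:(Y,\Delta)\to X$ over a log canonical center $W\subset Z^{\diamond}$ (via a minimal lc center dominating $W$, as in the proof of Corollary~\ref{W+lc.db.lem}, with the different $\diff^*$) again yields a crepant log structure, that its dimension genuinely drops below $n$, and — crucially — that inversion of adjunction keeps the $\mld$ of the relevant closed subset below $\lcg$ of the new (smaller) dimension. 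Since $\lcg$ is monotone non-increasing in $n$ this last inequality is the easy direction, but the dimension drop and the identification of $W^{\diamond}$ upstairs require the precise inversion of adjunction \cite[7.10]{kk-singbook} already used above, so the argument is really a careful repackaging of the Corollary's proof rather than anything new.
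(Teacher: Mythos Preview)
Your argument is correct and in fact slightly cleaner than the paper's. Both proofs run induction on $\dim Z$, use Corollary~\ref{W+lc.db.lem} to get that a union with $Z^{\diamond}$ is Du~Bois, use (\ref{eq:2}) plus the induction hypothesis to handle intersections with $Z^{\diamond}$, and conclude via Property~\ref{db.sings.props.say}.2. The paper first decomposes $V=V_1\cup V_2$ with $V_2\subset Z^{\diamond}$, proves $V_1$ is Du~Bois via one application of Property~\ref{db.sings.props.say}.2, and then proves $V=V_1\cup(V\cap Z^{\diamond})$ is Du~Bois via a second application. You bypass this decomposition entirely: since Corollary~\ref{W+lc.db.lem} already gives $V\cup Z^{\diamond}$ Du~Bois, a single application of Property~\ref{db.sings.props.say}.2 to the quadruple $\{V\cap Z^{\diamond},\,V,\,Z^{\diamond},\,V\cup Z^{\diamond}\}$ suffices.

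One correction: your last paragraph manufactures a difficulty that is not there. The induction is on $\dim Z$, \emph{not} on $n=\dim Y$ or $\dim X$; the crepant log structure $g:(Y,\Delta)\to X$ stays fixed throughout. When you pass from $Z$ to $Z^{\diamond}$, you simply apply the induction hypothesis to the same $g$ with the smaller union of log canonical centers $Z^{\diamond}\subset X$ and the closed subset $V\cap Z^{\diamond}\subset Z^{\diamond}$, for which $\mld(V\cap Z^{\diamond},Y,\Delta)<\lcg(n)$ by (\ref{eq:2}). There is no need to build an induced crepant log structure on $Z^{\diamond}$, no dimension drop in $n$, no appeal to monotonicity of $\lcg$, and no further inversion of adjunction---all of that machinery was already consumed inside the proof of Corollary~\ref{W+lc.db.lem}. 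Likewise, your parenthetical ``or equivalently on $\dim X$'' should be dropped: $\dim X$ does not change.
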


\begin{proof} The proof is by induction on $\dim Z$. If $\dim Z=0$ then $V$ is a
  union of smooth points, hence Du~Bois.
  
  Write $V=V_1\cup V_2$, where $V_2\subset Z^{\diamond}$ and none of the irreducible
  components of $V_1$ is contained in $ Z^{\diamond}$.

  Note that $V_1\cup Z^{\diamond}$ is Du~Bois by (\ref{W+lc.db.lem}), and so is
  $Z^{\diamond}$.  Furthermore, $\mld(V_1\cap Z^{\diamond}, X,\Delta)<\lcg(n)$ by
  (\ref{eq:2}), hence $V_1\cap Z^{\diamond} $ is Du~Bois by induction since
  $\dim Z^{\diamond}<\dim Z$.  Thus $V_1$ is Du~Bois by (\ref{db.sings.props.say}.2).

  Next $\mld(V\cap Z^{\diamond}, X,\Delta)<\lcg(n)$ by (\ref{eq:2}), hence
  $V\cap Z^{\diamond} $ is Du~Bois by induction.  We already checked that $V_1$ and
  $V_1\cap Z^{\diamond} = V_1\cap (V\cap Z^{\diamond}) $ are Du~Bois.  Thus
  $V=V_1\cup (V\cap Z^{\diamond}) $ is Du~Bois by (\ref{db.sings.props.say}.2).
\end{proof}

\subsection*{Conjectures and comments}{\ }

\noindent
In the proof of Theorem~\ref{DB.epsilon.thm}, instead of $\mld(V, Y,\Delta)<\lcg(n)$,
we only use the assumption that $\mld(V_i, Y,\Delta)<\lcg(n-1)$ if $V_i$ is contained
in a log canonical center, and $\mld(V_i, Y,\Delta)<\lcg(n)$ otherwise. This suggests
that the following should be true.

\begin{ques}\label{DB.epsilon.crep.ques}
  Let $g:(Y, \Delta)\to Z$ be a crepant log structure. Let $V\subset Z$ be a closed
  subset with irreducible components $V_i$.  Let $Z_i\supset V_i$ be the minimal log
  canonical center that contains $V_i$ (we allow $Z_i=X$).  Assume that
  $\mld(V_i, Y,\Delta)<\lcg\bigl(\dim(Z_i)\bigr)$ for every $i$.  Is $V$ necessarily
  Du~Bois?
\end{ques}

\noindent
A related question is the following.

\begin{conj}\label{near.lc.=.lc.q}
  Let $(X, \Delta)$ be a quasi-projective, log canonical pair of dimension $n$.  Let
  $V\subset X$ be a closed subset such that $\mld(V, X,\Delta)<\lcg(n)$ and $V$
  contains all log canonical centers of $(X, \Delta)$.

  Then there is a log canonical pair $(X, \Theta)$ such that $V$ is the union of all
  log canonical centers of $(X, \Theta)$.
\end{conj}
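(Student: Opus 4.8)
The plan is to produce $\Theta$ of the form $\Theta=\Delta'+cE$, where $E$ is an effective divisor whose support equals (the divisorial part of) $V$ together with enough auxiliary divisors, $c<1$ is a coefficient close to $1$, and $\Delta'$ is an adjusted version of $\Delta$. The guiding heuristic is Definition~\ref{epsilon.defn}: since $\mld(V,X,\Delta)<\lcg(n)$, every divisor $F$ over $X$ with center contained in $V$ has $a(F,X,\Delta)>-\lcg(n)$, so bumping up coefficients by an amount $<\lcg(n)$ keeps the pair log canonical, while making those $F$ with $a(F,X,\Delta)$ small into true log canonical places. First I would reduce to the $\q$-factorial case by a small $\q$-factorialization, which does not change log canonical centers, and then pass to a dlt modification $h:(Y,\Delta_Y)\to(X,\Delta)$ as in Proposition~\ref{thm:one-div-models}, arranged so that every irreducible component $V_i$ of $V$ that is not already a log canonical center is dominated by a divisor $E_i$ on $Y$ with $-1<a(E_i,X,\Delta)<0$ and $a(E_i,X,\Delta)>-\lcg(n)+\delta$ for a fixed small $\delta$; here we also need that the non-divisorial components of $V$ are handled by adjunction and induction on dimension, since a general $\Theta$ realizing $V$ as its lc locus must cut out the lower-dimensional $V_i$ as intersections of lc centers.

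On $Y$, the natural candidate is $\Delta_Y+\sum_i\bigl(-a(E_i,X,\Delta)\bigr)E_i$, which makes each $E_i$ a log canonical place while the bound $\mld(V,Y,\Delta_Y)=\mld(V,X,\Delta)<\lcg(n)$ (using that $h$ is crepant) together with the $\q$-factoriality and Definition~\ref{epsilon.defn}(\ref{epsilon.defn}.2) guarantees the pair stays log canonical. The second step is to push this divisor down: I would run a suitable MMP — as in Lemma~\ref{mmp.on.crep.D.lem}, on $K_Y+\Delta_Y+\sum(-a(E_i))E_i$ relative to $X$ — or more directly take $\Theta:=h_*\bigl(\Delta_Y+\sum(-a(E_i))E_i\bigr)$ and argue that $(X,\Theta)$ is log canonical because its log discrepancies are computed by the same divisors over $X$, now with the $E_i$ turned into lc places. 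Then one checks that the lc centers of $(X,\Theta)$ are exactly the images of the lc places, i.e.\ the original lc centers of $(X,\Delta)$ (all contained in $V$ by hypothesis) together with the centers of the $E_i$, which are precisely the divisorial components of $V$; the lower-dimensional components of $V$ then appear as lc centers by restricting to these and invoking the inductive hypothesis via adjunction, using inequality~(\ref{eq:2}) to control minimal log discrepancies on the smaller-dimensional strata.

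The main obstacle is that the union of lc centers of a log canonical pair is a very rigid object — it is seminormal, its components satisfy strong codimension constraints, and by (\ref{eq:2})-type inequalities the intersection pattern of lc centers is constrained — whereas $V$ is an essentially arbitrary closed subset subject only to the numerical condition $\mld(V,X,\Delta)<\lcg(n)$ and the containment of all existing lc centers. So the real content is not the construction of $\Theta$ near the divisorial part of $V$, but showing that the \emph{non-divisorial} components of $V$ can simultaneously be realized as lc centers of a single $\Theta$; this is where one expects to need the full strength of the observation in the ``Conjectures and comments'' paragraph, namely that for a component $V_i$ sitting inside a minimal lc center $Z_i$ of dimension $m<n$, the relevant threshold is the smaller $\lcg(m)$, and an inductive construction of $\Theta|_{Z_i}$ (via $\diff^*$ and precise inversion of adjunction, as in Corollary~\ref{W+lc.db.lem}) must be glued back to a global $\Theta$ on $X$. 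Making these local constructions on different strata compatible — so that the resulting divisor is effective, has the right support, and keeps the global pair log canonical — is the delicate point, and it is plausible that it requires either an affirmative answer to Question~\ref{DB.epsilon.crep.ques} or an additional hypothesis (e.g.\ quasi-projectivity is used to have enough divisors in the linear systems one needs), which is presumably why this is stated as a conjecture rather than a theorem.
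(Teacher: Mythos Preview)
The statement you are trying to prove is labeled a \emph{Conjecture} in the paper and is not proved there; the paper only remarks that the klt case is handled in a separate work in preparation \cite{k-dano}, and that even the full conjecture would not by itself imply Theorem~\ref{DB.epsilon.thm}. So there is no ``paper's own proof'' to compare against, and your closing sentence correctly identifies that this is open.

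That said, your sketch has a concrete gap in the construction, not just in the gluing step you flag at the end. You propose to take $\Theta:=h_*\bigl(\Delta_Y+\sum_i(-a(E_i,X,\Delta))E_i\bigr)$, where the $E_i$ are $h$-exceptional divisors extracted by Proposition~\ref{thm:one-div-models}. But $h_*E_i=0$ for every such $E_i$, so this pushforward is simply $h_*\Delta_Y=\Delta$ and you have changed nothing on $X$. Running an MMP over $X$ on $K_Y+\Delta_Y+\sum(-a(E_i))E_i$ does not help either: the MMP is relative to $X$, so at the end you still push forward to $X$ and the exceptional part disappears. In other words, modifying coefficients of exceptional divisors upstairs cannot by itself produce a new boundary $\Theta$ on $X$ whose lc centers are different from those of $\Delta$.

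What one actually needs are effective $\q$-divisors \emph{on $X$} passing through the $V_i$ with controlled multiplicity---this is where quasi-projectivity is genuinely used, to guarantee enough divisors in suitable linear systems---and then one must show that adding these to (a perturbation of) $\Delta$ makes each $V_i$ an lc center without creating new lc centers or destroying log canonicity elsewhere. The numerical condition $\mld(V,X,\Delta)<\lcg(n)$ gives room for such a perturbation near each $V_i$ individually, but controlling the global interaction (especially when some $V_i$ sit inside pre-existing lc centers, as the paper's comment after the conjecture warns) is exactly the unresolved difficulty. Your discussion of the obstacles in the last paragraph is on target; the construction in your second paragraph, however, does not get off the ground.
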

 
Note that usually one can not choose $\Theta\geq \Delta$, as shown by the
2-dimensional example $\bigl(\a^2, (1-\eta)(x=0)+(1-\eta)(y=0)+\eta(x=y)\bigr)$.
Also, if $Z$ is a log canonical center of $(X, 0)$, then it is also a log canonical
center of any $(X, \Theta)$, so the assumption that $V$ contain all log canonical
centers of $(X, \Delta)$ is necessary in many cases.

\medskip

If $(X, \Delta)$ is klt, then a proof of Conjecture~\ref{near.lc.=.lc.q} is given in \cite{k-dano}. Together with \cite{k-db, k-db2}, this gives another proof of
the klt case of Theorem~\ref{DB.epsilon.thm}.
However, even the full conjecture does not seem to imply 
Theorem~\ref{DB.epsilon.thm}, since we get no information about
those $V_i$ that are contained in a log canonical center of $(X, \Delta)$.

A positive answer to the following stronger version would imply
Theorem~\ref{DB.epsilon.thm}.

\begin{ques}\label{near.lc.=.lc.q.2}
  Let $(X, \Delta)$ be a quasi-projective,  log canonical pair of dimension $n$.
  Is there  a  log canonical  pair
  $(X, \Theta)$ such that every 
  irreducible subvariety satisfying $\mld(V, X,\Delta)<\lcg(n)$
   is a  log canonical center of  $(X, \Theta)$?
\end{ques}

Note that usually we can not achieve that the  log canonical centers  are exactly the
$\{V\colon \mld(V, X,\Delta)<\lcg(n)\}$. Indeed, any intersection of log canonical centers is a union of log canonical centers, but this does not hold for the 
$\mld(V, X,\Delta)<\lcg(n)$ condition.

\begin{ack}
  We thank S.~Filipazzi for corrections, and O.~Fujino for pointing out that the
  arguments do not cover the analytic case of Theorem 1, despite our earlier claim.
  Partial financial support to JK was provided by the NSF under grant number
  DMS-1901855. SK was supported in part by NSF Grants DMS-1951376 and DMS-2100389,
  and a Simons Fellowship (Award Number 916188).
\end{ack}


\begin{thebibliography}{HMX14}

\bibitem[Amb03]{ambro}
Florin Ambro, \emph{Quasi-log varieties}, Tr. Mat. Inst. Steklova \textbf{240}
  (2003), no.~Biratsion. Geom. Linein. Sist. Konechno Porozhdennye Algebry,
  220--239. \MR{1993751 (2004f:14027)}

\bibitem[Bir12]{birkar11}
Caucher Birkar, \emph{Existence of log canonical flips and a special {LMMP}},
  Publ. Math. Inst. Hautes \'Etudes Sci. (2012), 325--368. \MR{2929730}

\bibitem[CDM17]{MR3647124}
JunYan Cao, Jean-Pierre Demailly, and Shin-ichi Matsumura, \emph{A general
  extension theorem for cohomology classes on non reduced analytic subspaces},
  Sci. China Math. \textbf{60} (2017), no.~6, 949--962. \MR{3647124}

\bibitem[DB81]{DuBois81}
Philippe {D}u {B}ois, \emph{Complexe de de {R}ham f{i}ltr\'e d'une vari\'et\'e
  singuli\`ere}, Bull. Soc. Math. France \textbf{109} (1981), no.~1, 41--81.
  \MR{MR613848 (82j:14006)}

\bibitem[DEL00]{MR1786484}
Jean-Pierre Demailly, Lawrence Ein, and Robert Lazarsfeld, \emph{A
  subadditivity property of multiplier ideals}, Michigan Math. J. \textbf{48}
  (2000), 137--156, Dedicated to William Fulton on the occasion of his 60th
  birthday. \MR{MR1786484 (2002a:14016)}

\bibitem[Dem01]{MR1919457}
Jean-Pierre Demailly, \emph{Multiplier ideal sheaves and analytic methods in
  algebraic geometry}, School on {V}anishing {T}heorems and {E}ffective
  {R}esults in {A}lgebraic {G}eometry ({T}rieste, 2000), ICTP Lect. Notes,
  vol.~6, Abdus Salam Int. Cent. Theoret. Phys., Trieste, 2001, pp.~1--148.
  \MR{1919457 (2003f:32020)}

\bibitem[Dem12]{MR2978333}
\bysame, \emph{Analytic methods in algebraic geometry}, Surveys of Modern
  Mathematics, vol.~1, International Press, Somerville, MA; Higher Education
  Press, Beijing, 2012. \MR{2978333}

\bibitem[Dem16]{MR3525916}
\bysame, \emph{Extension of holomorphic functions defined on non reduced
  analytic subvarieties}, The legacy of {B}ernhard {R}iemann after one hundred
  and fifty years. {V}ol. {I}, Adv. Lect. Math. (ALM), vol.~35, Int. Press,
  Somerville, MA, 2016, pp.~191--222. \MR{3525916}

\bibitem[Dem18]{MR3923220}
\bysame, \emph{Extension of holomorphic functions and cohomology classes from
  non reduced analytic subvarieties}, Geometric complex analysis, Springer
  Proc. Math. Stat., vol. 246, Springer, Singapore, 2018, pp.~97--113.
  \MR{3923220}

\bibitem[DK01]{dk}
Jean-Pierre Demailly and J{\'a}nos Koll{\'a}r, \emph{Semi-continuity of complex
  singularity exponents and {K}\"ahler-{E}instein metrics on {F}ano orbifolds},
  Ann. Sci. \'Ecole Norm. Sup. (4) \textbf{34} (2001), no.~4, 525--556.
  \MR{2002e:32032}

\bibitem[DP14]{MR3179606}
Jean-Pierre Demailly and Ho\`ang~Hi\d{\^{e}}p Ph\d{a}m, \emph{A sharp lower
  bound for the log canonical threshold}, Acta Math. \textbf{212} (2014),
  no.~1, 1--9. \MR{3179606}

\bibitem[Fuj17]{fuj-book}
Osamu Fujino, \emph{Foundations of the minimal model program}, MSJ Memoirs,
  vol.~35, Mathematical Society of Japan, Tokyo, 2017. \MR{3643725}

\bibitem[Fuj22]{fujino2022minimal}
\bysame, \emph{Minimal model program for projective morphisms between complex
  analytic spaces}, 2022.

\bibitem[HMX14]{hmx-acc}
Christopher~D. Hacon, James McKernan, and Chenyang Xu, \emph{{ACC} for log
  canonical thresholds}, Annals of Math. \textbf{180} (2014), 523--571.

\bibitem[HX13]{MR3032329}
Christopher~D. Hacon and Chenyang Xu, \emph{Existence of log canonical
  closures}, Invent. Math. \textbf{192} (2013), no.~1, 161--195. \MR{3032329}

\bibitem[KK10]{k-db}
J{\'a}nos Koll{\'a}r and S{\'a}ndor~J Kov{\'a}cs, \emph{Log canonical
  singularities are {D}u {B}ois}, J. Amer. Math. Soc. \textbf{23} (2010),
  no.~3, 791--813. \MR{2629988}

\bibitem[KK20]{k-db2}
\bysame, \emph{Deformations of log canonical and {$F$}-pure singularities},
  Algebr. Geom. \textbf{7} (2020), no.~6, 758--780. \MR{4156425}

\bibitem[KK22]{k-dano}
Dano Kim and J{\'a}nos Koll{\'a}r, \emph{(in preparation)}, 2022.

\bibitem[KM98]{km-book}
J{\'a}nos Koll{\'a}r and Shigefumi Mori, \emph{Birational geometry of algebraic
  varieties}, Cambridge Tracts in Mathematics, vol. 134, Cambridge University
  Press, Cambridge, 1998, With the collaboration of C. H. Clemens and A. Corti,
  Translated from the 1998 Japanese original.

\bibitem[Kol86]{k-hdi2}
J{\'a}nos Koll{\'a}r, \emph{Higher direct images of dualizing sheaves. {II}},
  Ann. of Math. (2) \textbf{124} (1986), no.~1, 171--202. \MR{847955
  (87k:14014)}

\bibitem[Kol94]{k-logsurf}
\bysame, \emph{Log surfaces of general type; some conjectures}, Classification
  of algebraic varieties ({L}'{A}quila, 1992), Contemp. Math., vol. 162, Amer.
  Math. Soc., Providence, RI, 1994, pp.~261--275. \MR{1272703 (95c:14042)}

\bibitem[Kol13]{kk-singbook}
\bysame, \emph{Singularities of the minimal model program}, Cambridge Tracts in
  Mathematics, vol. 200, Cambridge University Press, Cambridge, 2013, With the
  collaboration of S{\'a}ndor Kov{\'a}cs.

\bibitem[Kol14]{k-1/6}
\bysame, \emph{Semi-normal log centres and deformations of pairs}, Proc. Edinb.
  Math. Soc. (2) \textbf{57} (2014), no.~1, 191--199. \MR{3165020}

\bibitem[Kol21]{k-nqmmp}
\bysame, \emph{Relative {MMP} without {$\mathbb{Q}$}-factoriality}, Electron.
  Res. Arch. \textbf{29} (2021), no.~5, 3193--3203. \MR{4342251}

\bibitem[Kol22]{k-modbook}
\bysame, \emph{Families of varieties of general type}, (book in preparation,
  \url{https://web.math.princeton.edu/~kollar/FromMyHomePage/modbook-final.pdf}),
  2022.

\bibitem[Kov99]{Kovacs99}
S{\'a}ndor~J Kov{\'a}cs, \emph{Rational, log canonical, {D}u {B}ois
  singularities: on the conjectures of {K}oll\'ar and {S}teenbrink}, Compositio
  Math. \textbf{118} (1999), no.~2, 123--133. \MR{MR1713307 (2001g:14022)}

\bibitem[Kov11]{MR2784747}
\bysame, \emph{Du {B}ois pairs and vanishing theorems}, Kyoto J. Math.
  \textbf{51} (2011), no.~1, 47--69. \MR{2784747 (2012d:14028)}

\bibitem[Kov12a]{Kovacs11d}
\bysame, \emph{The intuitive definition of {D}u {B}ois singularities}, Geometry
  and arithmetic, EMS Ser. Congr. Rep., Eur. Math. Soc., Z\"{u}rich, 2012,
  pp.~257--266. \MR{2987664}

\bibitem[Kov12b]{Kovacs11c}
\bysame, \emph{The splitting principle and singularities}, Compact moduli
  spaces and vector bundles, Contemp. Math., vol. 564, Amer. Math. Soc.,
  Providence, RI, 2012, pp.~195--204. \MR{2894635}

\bibitem[KS11]{Kovacs-Schwede11}
S{\'a}ndor~J Kov{\'a}cs and Karl Schwede, \emph{Hodge theory meets the minimal
  model program: a survey of log canonical and {{D}u~{B}ois} singularities},
  Topology of Stratified Spaces (2011), 51--94. \MR{ISBN 9780521191678}

\bibitem[LM22]{lyu-mur}
Shiji Lyu and Takumi Murayama, \emph{The relative minimal model program for
  excellent algebraic spaces and analytic spaces in equal characteristic zero},
  2022.

\bibitem[Nak87]{MR946250}
Noboru Nakayama, \emph{The lower semicontinuity of the plurigenera of complex
  varieties}, Algebraic geometry, {S}endai, 1985, Adv. Stud. Pure Math.,
  vol.~10, North-Holland, Amsterdam, 1987, pp.~551--590. \MR{946250
  (89h:14028)}

\bibitem[Sch07]{MR2339829}
Karl Schwede, \emph{A simple characterization of {D}u {B}ois singularities},
  Compos. Math. \textbf{143} (2007), no.~4, 813--828. \MR{MR2339829
  (2008k:14034)}

\bibitem[Slo03]{sequences}
N.~J.~A Sloane, \emph{The on-line encyclopedia of integer sequences},
  \url{https://oeis.org/}, 2003.

\bibitem[VP21]{dvp} David Villalobos-Paz, \emph{Moishezon spaces and projectivity
    criteria}, 2021.
\end{thebibliography}

\def\cprime{$'$} \def\cprime{$'$} \def\cprime{$'$} \def\cprime{$'$}
  \def\cprime{$'$} \def\dbar{\leavevmode\hbox to 0pt{\hskip.2ex
  \accent"16\hss}d} \def\cprime{$'$} \def\cprime{$'$}
  \def\polhk#1{\setbox0=\hbox{#1}{\ooalign{\hidewidth
  \lower1.5ex\hbox{`}\hidewidth\crcr\unhbox0}}} \def\cprime{$'$}
  \def\cprime{$'$} \def\cprime{$'$} \def\cprime{$'$}
  \def\polhk#1{\setbox0=\hbox{#1}{\ooalign{\hidewidth
  \lower1.5ex\hbox{`}\hidewidth\crcr\unhbox0}}} \def\cdprime{$''$}
  \def\cprime{$'$} \def\cprime{$'$} \def\cprime{$'$} \def\cprime{$'$}
\providecommand{\bysame}{\leavevmode\hbox to3em{\hrulefill}\thinspace}
\providecommand{\MR}{\relax\ifhmode\unskip\space\fi MR }
\providecommand{\MRhref}[2]{%
  \href{http://www.ams.org/mathscinet-getitem?mr=#1}{#2}
}
\providecommand{\href}[2]{#2}

\end{document}